\newcommand{\f}{\frac}
\long\def\alert#1{\parindent2em\smallskip\hbox to\hsize
{\hskip\parindent\vrule
\vbox{\advance\hsize-2\parindent\hrule\smallskip\parindent.4\parindent
\narrower\noindent#1\smallskip\hrule}\vrule\hfill}\smallskip\parindent0pt}
 \newtheorem{thm}{Theorem}[section]
\newtheorem{cor}[thm]{Corollary}
 \newtheorem{lem}[thm]{Lemma}
 \newtheorem{prop}[thm]{Proposition}
\theoremstyle{definition}
 \newtheorem{defn}[thm]{Definition}
\theoremstyle{remark}
 \numberwithin{equation}{section}
\begin{document}

\title[ the direct sum of Lie algebras ]
 {$c$-nilpotent multiplier and $c$-capability of the direct sum of Lie algebras}
\author[F. Johari]{Farangis Johari}
\author[P. Niroomand]{Peyman Niroomand}
\author[M. Parvizi]{Mohsen Parvizi}

\address{Department of Pure Mathematics\\
Ferdowsi University of Mashhad, Mashhad, Iran}
\email{e-mail:farangis.johari@mail.um.ac.ir,farangisjohary@yahoo.com}
\address{School of Mathematics and Computer Science\\
Damghan University, Damghan, Iran}
\email{niroomand@du.ac.ir, p$\_$niroomand@yahoo.com}

\address{Department of Pure Mathematics\\
Ferdowsi University of Mashhad, Mashhad, Iran}
\email{parvizi@um.ac.ir}

\thanks{\textit{Mathematics Subject Classification 2010.} Primary 17B30; Secondary 17B05, 17B99.}

\keywords{$c$-nilpotent multiplier, Lie algebra, $c$-capability}

\date{\today}


\begin{abstract}
In this paper, we determine the behavior of the $c$-nilpotent multiplier of Lie algebras with respect to the direct sums. Then we give some results on the $c$-capability of the direct sum of finite dimensional Lie algebras.
\end{abstract}

\maketitle

\section{Motivation and Introduction }
Let $L$ be a Lie algebra presented as the quotient algebra of a free Lie algebra $ F $ by an ideal $R.$ Then the $c$-nilpotent multiplier of $L,$ is defined to be
\[\mathcal{M}^{(c)}(L)=\frac{ R\cap F^{c+1}}{[R,_c F]}, ~\text{for all}~c\geq 1\]
where $F^{c+1} $ is the $(c + 1)$-th term of the lower central series of $F$ and $[R,_0 F]= R,
[R,_c F]=[[R,_{c-1} F], F].$ This is analogous to the definition of the Baer-invariant of a group with respect to the
variety of nilpotent groups of class at most $c$ given by Baer in \cite{1b} (see \cite{28b,4b,el5,11b,sal2} for more information
on the Baer invariant of groups). The $1$-nilpotent multiplier of $L ,$ is the more studied as the Schur multiplier of $ L,$ $\mathcal{M}(L)=R\cap F^2/[R,F],  $  (see for instance \cite{24b,2b,3b,34b, 20b}).
 It is proved that the Lie algebra $\mathcal{M}^{(c)}(L)$ is abelian and independent
of the choice of the free Lie algebra $F.$

Looking \cite{24b,2b,3b,28b,34b,33b,21b} show that the behavior of the Schur multiplier with respect to direct
sum of two Lie algebras, may leads us to have more results on the Schur multiplier of a Lie algebra.\\
From \cite{miller}, the  formula of  the Schur multiplier for the direct product of two groups  is well known. Later, the same result for the direct sum  of two Lie algebras  was proved  in \cite{3b,salco}.  Moghaddam  in \cite{11b} extended  this result for the c-nilpotent multiplier of the direct product of two groups and also Ellis  improved   the result  of Moghaddam  in \cite{el5}.
 The last two authors in \cite{21b} showed the behaviour of the $2$-nilpotent
multipliers respect to the direct sum of two Lie algebras. Recently, Salemkar and Aslizadeh obtained  a formula for the $c$-nilpotent multipliers of the direct sum of Lie algebras whose abelianizations are finite dimensional (see \cite[Theorem 2.5]{salj}) and also generalize it for arbitrary Lie algebras, in the case $c+1$ is a prime number or $c+1=4$ which is a strong restriction (see \cite[Theorem 2.9]{salj}). Here, we intend to generalize the result of Salemkar et. al. to the $c$-nilpotent multipliers for any arbitrary $c$, and then we give some results concerning the $c$-capability of the direct sum of Lie algebras.
\section{The $c$-th  term of the lower central series of free product of two Lie algebras}
In this section, we are going to obtain  the formula  of  the $c$-th  term of lower central series of free product of two Lie algebras.  \\
The definition of  basic commutators  plays a fundamental role in obtaining our main results.

Following Shirshov \cite{22b} for a free Lie algebra $L$ on the set $X=\{x_1,x_2,\ldots \}$.
The  basic commutators on the set $X$ defined inductively as follows.
\begin{itemize}
\item[(i)] The generators $x_1,x_2,\ldots, x_n$ are basic commutators of length one and ordered by setting $x_i < x_j$ if $i < j.$

\item[(ii)] If all the basic commutators $d_i$ of length less than $t$ have been defined and ordered, then we may define the basic commutators of length $t$ to be all commutators of the form $[d_i, d_j]$ such that the sum of lengths of $d_i$ and $d_j$ is $t$, $d_i > d_j$, and if $d_i =[d_s, d_t]$, then $d_j\geq d_t$. The basic commutators of length $t$ follow those of lengths less than $t$. The basic commutators of the same length can be ordered in any way, but usually the lexicographical order is used.
\end{itemize}

   The number of all basic commutators on a set $X=\{x_1,x_2,\ldots x_d\}$ of length $n$ is denoted by $l_d(n)$. Thanks to \cite{22b},  we have
   \[l_d(n)=\frac{1}{n}\sum_{m|n}\mu (m)d^{\f{n}{m}},\]
   where $\mu (m)$ is the M\"{o}bius function, defined by $\mu (1) = 1, \mu (k) = 0$ if $k$ is divisible by a square, and
$\mu (p_1 \ldots p_s) = (-1)^s $ if $p_1,\ldots , p_s$ are distinct prime numbers.
Using the topside statement and looking  \cite[Lemma 1.1]{20b} and \cite{22b}, we have the next theorem.
\begin{thm}\label{13}
Let $ F $ be a free Lie algebra on a set $ X $, then $ F^c/ F^{c+i}$ is an abelian Lie algebra with the basis of all basic commutators on $ X $ of lengths $ c,c+1,\ldots,c-i+1 $ for all $i$, $0 \leq i \leq c$. In particular, $ F^c/ F^{c+1}$ is an abelian Lie algebra of dimension $l_d(c)$.
\end{thm}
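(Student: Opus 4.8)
The plan is to establish the statement by showing that the basic commutators of lengths $c, c+1, \ldots, c+i-1$ (the author writes this range slightly awkwardly as $c, c+1, \ldots, c-i+1$, but the intended range is clearly the $i$ consecutive lengths starting at $c$) form a basis for the quotient $F^c/F^{c+i}$, and then specializing to $i=1$ to read off the dimension formula. The key input is the standard fact, going back to Shirshov \cite{22b} and recorded in \cite[Lemma 1.1]{20b}, that the set of all basic commutators on $X$ forms a basis for the free Lie algebra $F$ itself, and moreover that $F^c$ is spanned precisely by the basic commutators of length $\geq c$. This is the structural backbone I would invoke rather than reprove.

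First I would recall that $F = \bigoplus_{n\geq 1} V_n$ is graded, where $V_n$ is the span of the basic commutators of length exactly $n$, and that $\dim V_n = l_d(n)$ when $X$ has $d$ elements. The grading is compatible with the bracket in the sense that $[V_m, V_n]\subseteq V_{m+n}$, so the lower central series terms decompose as $F^c = \bigoplus_{n\geq c} V_n$; this is exactly the assertion that $F^c$ has the basic commutators of length at least $c$ as a basis. Granting this, the quotient $F^c/F^{c+i}$ is identified with $\bigoplus_{n=c}^{c+i-1} V_n$, whose basis is the disjoint union of the bases of $V_c, \ldots, V_{c+i-1}$, namely all basic commutators of those lengths. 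That this quotient is abelian follows immediately because any bracket of two of these basis elements has length at least $c+1 \geq $ lands one degree higher and, when both factors have length $\geq c$, the product sits in degree $\geq 2c \geq c+i$ for the relevant ranges, hence vanishes in the quotient; more cleanly, $[F^c, F^c]\subseteq F^{2c}\subseteq F^{c+i}$ whenever $i\leq c$, which is exactly the hypothesis $0\leq i\leq c$, so the induced bracket on $F^c/F^{c+i}$ is trivial.

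For the final sentence I would simply set $i=1$. Then $F^c/F^{c+1}\cong V_c$, an abelian Lie algebra whose basis is all basic commutators of length exactly $c$, of which there are $l_d(c)$ by the counting formula quoted from \cite{22b}. Hence $\dim(F^c/F^{c+1}) = l_d(c)$, and abelianity is the special case $i=1$ of what was just shown (indeed $[F^c,F^c]\subseteq F^{2c}\subseteq F^{c+1}$ for all $c\geq 1$).

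The genuine obstacle here is not the bookkeeping but justifying that the basic commutators of length $\geq c$ actually span $F^c$ and are linearly independent modulo $F^{c+i}$; both rest on the deep fact that the basic commutators constitute a \emph{basis} of the free Lie algebra and are graded by length. Since the excerpt explicitly cites \cite[Lemma 1.1]{20b} and Shirshov \cite{22b} for precisely this, I would treat it as established and devote the argument only to the grading bookkeeping and the abelianity estimate $[F^c,F^c]\subseteq F^{2c}$, which is where the constraint $i\leq c$ earns its keep.
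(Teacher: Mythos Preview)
Your proposal is correct and matches the paper's approach: the paper states this theorem without a proof, simply citing Shirshov \cite{22b} and \cite[Lemma~1.1]{20b} for the basic-commutator basis of $F$, which is exactly the input you invoke. Your added bookkeeping (the graded decomposition $F^c=\bigoplus_{n\geq c}V_n$ and the observation $[F^c,F^c]\subseteq F^{2c}\subseteq F^{c+i}$ for $i\leq c$) is the natural and correct way to extract the statement from those citations, and your identification of the typo in the range of lengths is also right.
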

The following definition is vital and will be used in the rest.
\begin{defn}\label{3}
Let $ c\geq 0.$
Consider the free product $ A*B $ of two Lie algebras $A$ and $B.$
Let us impose the ordering $ A<B.$ The set of  basic commutators of weight $ c $ on two letters  $ A$ and $B $  is denoted by $ M.$ We define
\[\sum (A*B)_{c}=\langle [P_1,\ldots,P_{c}]_{\lambda}|\lambda\in M\rangle,\]
where $ P_i=A $ or $ P_i=B.$ In fact, $\sum (A*B)_{c}  $ is the  subalgebra generated by all the basic commutator subalgebras $[P_1,\ldots,P_{c}]_{\lambda}  $ such that $ \lambda\in M. $
\end{defn}
For example, we have $\sum (A*B)_{1}=A*B,$ $\sum (A*B)_{2}=[A,B]  $ and $ \sum (A*B)_{3}=\langle [B,A,A], [B,A,B]\rangle.$
\begin{lem}\label{4}
Let $A$ and $B$ be two Lie algebras. Then $\sum (A*B)_{c}=\langle [\sum (A*B)_{c-1},A],[\sum (A*B)_{c-1},B] \rangle =[\sum (A*B)_{c-1},A*B]  $  and
$\sum (A*B)_{c}  $ is an ideal of $ A*B$ and $ c\geq 2.$
\end{lem}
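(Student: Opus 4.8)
The plan is to prove all three assertions (the two displayed equalities and the ideal property) simultaneously by induction on $c$, with the recursive description of basic commutators in Definition~\ref{3}, the basis statement of Theorem~\ref{13}, and the Jacobi identity in the form
\[
[x,[u,v]]=[[x,u],v]-[[x,v],u]
\]
as the only tools. Throughout I write $V_k:=\sum(A*B)_k$ for brevity. The base case $c=2$ I would settle directly from the definition, since the unique basic commutator of weight $2$ is $[B,A]$, so that $V_2$ is generated by $[A,B]$; the inductive step below is then run for $c\geq 3$.

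For the inductive step assume the statement for $c-1$; in particular $V_{c-1}$ is an ideal of $A*B$. I would first establish the second equality $\langle[V_{c-1},A],[V_{c-1},B]\rangle=[V_{c-1},A*B]$. The inclusion $\subseteq$ is trivial since $A,B\subseteq A*B$. For $\supseteq$, express an arbitrary $w\in A*B$ as a combination of iterated brackets of elements of $A\cup B$ and induct on the bracket-length of $w$: for $x\in V_{c-1}$ and $w=[w_1,w_2]$ the Jacobi identity gives $[x,w]=[[x,w_1],w_2]-[[x,w_2],w_1]$, and because $V_{c-1}$ is an ideal both $[x,w_1]$ and $[x,w_2]$ lie again in $V_{c-1}$, so the second entry has strictly smaller bracket-length and the expression collapses into $[V_{c-1},A]+[V_{c-1},B]$.

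Next I would prove the first equality $V_c=\langle[V_{c-1},A],[V_{c-1},B]\rangle$. For $\supseteq$, a bracket of a weight-$(c-1)$ basic commutator with a generator is a weight-$c$ commutator, hence by Theorem~\ref{13} (the collecting process) a linear combination of weight-$c$ basic commutators, so it lies in $V_c$. The inclusion $\subseteq$ is the crux: it amounts to $V_c=[V_{c-1},A]+[V_{c-1},B]$, i.e.\ to the fact that the homogeneous part is generated in degree one. To prove it I would take a weight-$c$ bracket $[\mu,\nu]$ with $\nu$ of weight $q\geq 2$, use generation in degree one in weight $q$ to write $\nu$ as a combination of brackets $[\nu_1,\nu_2]$ with $\nu_2$ a generator, and apply the Jacobi identity $[\mu,[\nu_1,\nu_2]]=[[\mu,\nu_1],\nu_2]-[[\mu,\nu_2],\nu_1]$. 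Here $[\mu,\nu_1]$ has weight $c-1$ and so lies in $V_{c-1}$ by Theorem~\ref{13}, putting the first term in $[V_{c-1},A]+[V_{c-1},B]$, whereas in the second term the factor $\nu_1$ has weight $q-1$; a downward induction on the weight of the second bracket-entry, terminating when it is a generator, then places every weight-$c$ basic commutator inside $[V_{c-1},A]+[V_{c-1},B]$. I expect this to be the main obstacle, since one must absorb the basic commutators that are not right-normed (e.g.\ $[[B,A,A],[B,A]]$ in weight $5$), and it is precisely here that the generation-in-degree-one property enters.

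Finally, the ideal property for $c$ is immediate once the second equality is available: $V_c=[V_{c-1},A*B]$ is the commutator of the ideal $V_{c-1}$ (by the inductive hypothesis) with the whole algebra, and such a commutator is again an ideal by one further application of the Jacobi identity. This closes the induction.
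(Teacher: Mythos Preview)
Your approach coincides with the paper's: both run an induction on $c$, use the Jacobi identity together with the inductive hypothesis that $V_{c-1}=\sum(A*B)_{c-1}$ is an ideal, and deduce the ideal property of $V_c$ from $V_c=[V_{c-1},A*B]$. The only organisational difference is in the second equality $\langle[V_{c-1},A],[V_{c-1},B]\rangle=[V_{c-1},A*B]$. You reduce an arbitrary $[x,w]$ by inducting on the bracket-length of $w$; the paper instead writes $l\in A*B$ as $a+b+\sum_i[a_i,b_i]$ with $a,a_i\in A$, $b,b_i\in B$, and applies Jacobi only to the depth-one brackets $[a_i,b_i]$. Your bracket-length induction is really the same Jacobi reduction carried out to all depths, so it is a little more self-contained; the paper's one-step version implicitly relies on iterating the same argument.

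One further remark: the paper does not argue the first equality $V_c=\langle[V_{c-1},A],[V_{c-1},B]\rangle$ at all---it opens with the sentence ``Clearly, $\sum(A*B)_{c+1}=\langle[\sum(A*B)_c,A],[\sum(A*B)_c,B]\rangle$'' and treats this as coming straight from Definition~\ref{3}. Your extra paragraph supplying a collecting/Jacobi argument for this point is therefore an addition rather than a deviation; note, however, that your $\supseteq$-direction only checks $[x,a]\in V_c$ for $x$ a weight-$(c-1)$ basic commutator, not for general $x\in V_{c-1}$, so if you want to keep this step explicit you should extend it to arbitrary elements of the subalgebra $V_{c-1}$ (e.g.\ via the derivation property of $[\,\cdot\,,a]$ together with the ideal property already established).
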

\begin{proof}
Clearly, $\sum (A*B)_{c+1} =\langle [\sum (A*B)_{c},A],[\sum (A*B)_{c},B] \rangle.$ We proceed by induction on $c.$ If $c=2, $ then $\sum (A*B)_{2}=[A,B].  $  Let $c\geq 3.$
By the induction hypothesis, we have  $\sum (A*B)_{c}  $ is an ideal of $ A*B$ and
\[\sum (A*B)_{c}=\langle [\sum (A*B)_{c-1},A],[\sum (A*B)_{c-1},B] \rangle =[\sum (A*B)_{c-1},A*B]. \]
We claim that \[ \langle [\sum (A*B)_{c},A],[\sum (A*B)_{c},B] \rangle =[\sum (A*B)_{c},A*B]. \] Clearly, $ \langle [\sum (A*B)_{c},A],[\sum (A*B)_{c},B] \rangle \subseteq [\sum (A*B)_{c},A*B]. $ It is enough to show that \[ [\sum (A*B)_{c},A*B] \subseteq   \langle [\sum (A*B)_{c},A],[\sum (A*B)_{c},B] \rangle . \]
Let $l=a+b+w  \in A*B, $ $ w=\sum_{i=1}^n [a_i,b_i] $ and $ x\in \sum (A*B)_{c}$ such that $ a,a_i\in A, $ $ b,b_i\in B $ and $ w\in [A,B].$
 We show that $ [x,l]\in \langle [\sum (A*B)_{c},A],[\sum (A*B)_{c},B] \rangle. $\\
For this, we know $ [x,l]=[x,a+b+w]=[x,a]+[x,b]+[x,w].$ Clearly, \[ [x,a]+[x,b]\in \langle [\sum (A*B)_{c},A],[\sum (A*B)_{c},B] \rangle. \] Since
$[x,w]=[x,\sum_{i=1}^n [a_i,b_i]]=\sum_{i=1}^n [x, [a_i,b_i]],$
it is enough to see that $  [x, [a_i,b_i]]\in  \langle [\sum (A*B)_{c},A],[\sum (A*B)_{c},B] \rangle. $
 The induction hypothesis implies $ [b_i,x], [x,a_i]\in \sum (A*B)_{c}.$  By the Jacobian identity, $[x, [a_i,b_i]]=[b_i,x,a_i]+ [x,a_i,b_i] $ and so \[ [x, [a_i,b_i]]=[b_i,x,a_i]+ [x,a_i,b_i]\in \langle [\sum (A*B)_{c},A],[\sum (A*B)_{c},B] \rangle. \]
Hence $[x,w]\in   \langle [\sum (A*B)_{c},A],[\sum (A*B)_{c},B] \rangle. $ Therefore  \[\sum (A*B)_{c+1} = [\sum (A*B)_{c},A*B].\] Now, we show that  $\sum (A*B)_{c+1}  $ is an ideal of $ A*B.$ Let
$y\in A*B  $ and $x=\sum_{i=1}^n [x_i,l_i]\in  \sum (A*B)_{c+1}  $
such that $ x_i\in \sum (A*B)_{c}  $ and $ l_i\in A*B.  $
Since $ [x,y] =\sum_{i=1}^n [x_i,l_i,y]$ and $  [x_i,l_i]\in \sum (A*B)_{c} $ for all $ 1\leq i\leq n, $ we conclude $ [x,y]\in   \sum (A*B)_{c+1}. $
Hence $\sum (A*B)_c$ is an ideal and result follows.
\end{proof}
We give the structures of the $(c + 1)$-th term of the the lower central series of a free product of two Lie algebras.
\begin{prop}\label{5}
Let $A$ and $B$ be two Lie algebras. Then
\[(A*B)^{c}=A^{c}+B^{c}+\sum (A*B)_{c}\] for $ c\geq 1. $
\end{prop}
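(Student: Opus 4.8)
The plan is to prove the two inclusions separately, treating $\supseteq$ as the easy direction and reserving the real work for $\subseteq$, which I would establish by induction on $c$ using Lemma \ref{4}. For $\supseteq$ no induction is needed: since $A\subseteq A*B$ and $B\subseteq A*B$, monotonicity of the lower central series gives $A^{c}\subseteq (A*B)^{c}$ and $B^{c}\subseteq (A*B)^{c}$, while every generator $[P_1,\ldots,P_c]_{\lambda}$ of $\sum (A*B)_{c}$ is a (sub)commutator of weight $c$ in elements of $A*B$ and hence lies in $(A*B)^{c}$. As $(A*B)^{c}$ is a subalgebra, the whole sum $A^{c}+B^{c}+\sum (A*B)_{c}$ is contained in it.

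For $\subseteq$ I would induct on $c$. The case $c=1$ is immediate because $\sum (A*B)_{1}=A*B$, so both sides equal $A*B$. For the inductive step, assuming $(A*B)^{c}=A^{c}+B^{c}+\sum (A*B)_{c}$, bilinearity of the bracket gives
\[(A*B)^{c+1}=[(A*B)^{c},A*B]=[A^{c},A*B]+[B^{c},A*B]+[\sum (A*B)_{c},A*B].\]
By Lemma \ref{4} the last summand is exactly $\sum (A*B)_{c+1}$, so it remains to prove the two symmetric inclusions $[A^{c},A*B]\subseteq A^{c+1}+\sum (A*B)_{c+1}$ and $[B^{c},A*B]\subseteq B^{c+1}+\sum (A*B)_{c+1}$.

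To pass from brackets against all of $A*B$ to brackets against the generating subalgebras $A$ and $B$, I would use an absorption argument. Set $T=A^{c+1}+\sum (A*B)_{c+1}$ and $\mathcal{Y}=\{y\in A*B : [A^{c},y]\subseteq T\}$. Using the Jacobi identity together with the facts that $\sum (A*B)_{c+1}$ is an ideal (so $\sum (A*B)_{c+2}\subseteq \sum (A*B)_{c+1}$) and that $A^{c+1}\subseteq A^{c}$, one checks that $\mathcal{Y}$ is a subalgebra of $A*B$. Since $A*B$ is generated by $A\cup B$, it then suffices to verify $A\cup B\subseteq \mathcal{Y}$, that is $[A^{c},A]\subseteq T$ and $[A^{c},B]\subseteq T$; the first is trivial since $[A^{c},A]=A^{c+1}$.

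The main obstacle is the remaining mixed-commutator inclusion $[A^{c},B]\subseteq \sum (A*B)_{c+1}$, which is exactly the step where one must see that genuinely mixed commutators collapse into $\sum (A*B)_{c+1}$. I would extract it from the auxiliary statement $[A^{i},\sum (A*B)_{j}]\subseteq \sum (A*B)_{i+j}$ for all $i,j\geq 1$, proved by induction on $i$: the base case $[A,\sum (A*B)_{j}]\subseteq [A*B,\sum (A*B)_{j}]=\sum (A*B)_{j+1}$ is Lemma \ref{4}, and the step writes $A^{i+1}=[A^{i},A]$ and expands $[[w,a],s]$ by Jacobi into $[[w,s],a]+[w,[a,s]]$, both of which fall into $\sum (A*B)_{i+j+1}$ by the induction hypothesis and Lemma \ref{4}. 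Feeding this back through $A^{c}=[A^{c-1},A]$ and a short induction on $c$ (whose Jacobi expansion produces a term $[A^{c-1},\sum (A*B)_{2}]$ handled by the auxiliary statement) yields $[A^{c},B]\subseteq \sum (A*B)_{c+1}$, and the symmetric argument finishes $[B^{c},A*B]$. The delicate point throughout is bookkeeping: tracking precisely which subscript of $\sum (A*B)$ each Jacobi term lands in, and repeatedly invoking the ideal property to absorb the higher-weight remainders.
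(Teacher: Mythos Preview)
Your proof is correct and follows the same inductive skeleton as the paper: assume the formula at level $c$, expand $(A*B)^{c+1}=[A^{c},A*B]+[B^{c},A*B]+[\sum(A*B)_c,A*B]$, identify the last summand via Lemma \ref{4}, and reduce everything to the mixed inclusions $[A^{c},B]\subseteq\sum(A*B)_{c+1}$ and its symmetric twin.

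Where you differ is in how you justify that reduction and that inclusion. The paper simply decomposes an arbitrary $l\in A*B$ as $a+b+w$ with $w\in[A,B]$ and then asserts ``we can see that $[x,b]+[x,w]\in\sum(A*B)_{c+1}$'' without further argument. You instead (i) pass from $A*B$ to its generators $A\cup B$ via the absorption set $\mathcal{Y}=\{y:[A^{c},y]\subseteq T\}$, checking it is a subalgebra using $A^{c+1}\subseteq A^{c}$ and the ideal property of $\sum(A*B)_{c+1}$, and (ii) prove the auxiliary graded inclusion $[A^{i},\sum(A*B)_{j}]\subseteq\sum(A*B)_{i+j}$ by induction, from which $[A^{c},B]\subseteq\sum(A*B)_{c+1}$ follows. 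Both steps are sound; your subalgebra check is exactly the place where $A^{c+1}\subseteq A^{c}$ is needed, and your Jacobi bookkeeping in the auxiliary lemma is accurate. In effect you have filled in the details the paper suppresses, at the cost of a slightly longer argument but with the benefit of not relying on the somewhat imprecise ``$w=\sum[a_i,b_i]$'' description of $[A,B]$.
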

\begin{proof}
We proceed by induction on $ c.$ If $ c=1,$ then $ A*B=A+B+ [A,B]$ and the result holds. Let $ c\geq 2.$ By the induction hypothesis , $ (A*B)^{c}=A^{c}+B^{c}+\sum (A*B)_{c}. $ It is easy to see that
\begin{align*}&(A*B)^{c+1}=[(A*B)^{c},A*B]=[A^{c}+B^{c}+\sum (A*B)_{c},A*B]\\&=[A^{c},A*B]+[B^c,A*B]+[\sum (A*B)_{c},A*B].\end{align*}
  By Lemma \ref{4}, $ [\sum (A*B)_{c},A*B]=\sum (A*B)_{c+1}. $
 We conclude that \[ A^{c+1}+B^{c+1}+\sum (A*B)_{c+1}\subseteq (A*B)^{c+1}=[A^{c},A*B]+[B^c,A*B]+\sum (A*B)_{c+1}. \]
  Now, we show that \[[A^{c},A*B]+[B^c,A*B]+\sum (A*B)_{c+1}\subseteq  A^{c+1}+B^{c+1}+\sum (A*B)_{c+1}.\]
Let $ x\in A^c,y\in B^c $ and $l=a+b+w  \in A*B, $ $ w=\sum_{i=1}^n [a_i,b_i] $  such that $ a,a_i\in A, $ $ b,b_i\in B $ and $ w\in [A,B].$ Since
\[ [x,l]=[x,a]+[x,b]+[x,w] ~\text{and}~ [y,l]=[y,a]+[y,b]+[y,w],\] we can see that $ [x,b]+[x,w],[y,a]+[y,w]\in  \sum (A*B)_{c+1}.$
Hence $ [A^{c},A*B]+[B^c,A*B]+\sum (A*B)_{c+1}= A^{c+1}+B^{c+1}+\sum (A*B)_{c+1}. $
It completes the proof.
\end{proof}
The next step gives  a generating set for $\sum (F_1*F_2)_{c+1}  $ in terms of the
free generators of $F_1$ and $F_2.$
\begin{prop}\label{7}
Let $F_1$ and $F_2$ be two free Lie algebras  generated by  the set $X$ and $Y,$ respectively,
and $F = F_1 * F_2.$  Then $\sum (F_1*F_2)_{c+1}+ F^{c+2}/  F^{c+2}$ is an abelian Lie
algebra with the basis of all basic commutators $ \lambda $ of weight $ c+1 $ in the set $X\cup Y $ which involve at least one $ x_i $ and at least one $ y_j.$
\end{prop}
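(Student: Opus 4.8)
The plan is to exploit that the free product $F=F_1*F_2$ is itself the free Lie algebra on the disjoint union $X\cup Y$, so Theorem \ref{13} applies to $F$ directly: $F^{c+1}/F^{c+2}$ is abelian with basis the set $\mathcal{B}$ of all basic commutators of weight $c+1$ on $X\cup Y$. I would first partition $\mathcal{B}$ into three classes according to the generators occurring: those using only letters of $X$, those using only letters of $Y$, and the ``mixed'' ones using at least one $x_i$ and at least one $y_j$. Writing $V_1,V_2,V_3$ for the subspaces they span, Theorem \ref{13} gives $F^{c+1}/F^{c+2}=V_1\oplus V_2\oplus V_3$. On the other hand, applying Proposition \ref{5} with $c+1$ in place of $c$ yields $F^{c+1}=F_1^{c+1}+F_2^{c+1}+\sum(F_1*F_2)_{c+1}$, so the whole strategy is to identify the images of the three summands in $F^{c+1}/F^{c+2}$ with $V_1,V_2,V_3$ respectively; the statement then follows by identifying the third image, together with the fact that it inherits abelianness from $F^{c+1}/F^{c+2}$.

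For the two ``pure'' summands I would argue that the image of $F_1^{c+1}$ in $F^{c+1}/F^{c+2}$ is exactly $V_1$ (and symmetrically $F_2^{c+1}\mapsto V_2$). Here the point is that $F_1$ is the free Lie algebra on $X$, so by Theorem \ref{13} applied to $F_1$ the space $F_1^{c+1}/F_1^{c+2}$ has as basis the basic commutators of weight $c+1$ on $X$; choosing the global order on $X\cup Y$ to restrict to the given order on $X$, these are precisely the elements of $\mathcal{B}$ of the first class, so they remain linearly independent modulo $F^{c+2}$ and span $V_1$. Brackets in $F_1^{c+1}$ of weight strictly greater than $c+1$ lie in $F^{c+2}$ and vanish, which is why the image is no larger than $V_1$.

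The crux is the mixed summand. I would show that $\sum(F_1*F_2)_{c+1}+F^{c+2}/F^{c+2}\subseteq V_3$ using the multidegree (bigrading) of the free Lie algebra $F$ by the number of occurrences of $X$-letters and $Y$-letters: this bigrading is preserved both by bracketing and by the rewriting of an arbitrary bracket into basic commutators, since $F^{c+2}$ is a homogeneous ideal. By Definition \ref{3} every generator $[P_1,\dots,P_{c+1}]_{\lambda}$ of $\sum(F_1*F_2)_{c+1}$ corresponds to a basic commutator $\lambda$ of weight $c+1\ge 2$ on the two letters $A,B$, and every such $\lambda$ genuinely involves both $A$ and $B$ (there is no basic commutator of weight $\ge 2$ on a single letter); hence each generator sits in a bidegree $(p,q)$ with $p,q\ge 1$ and therefore reduces modulo $F^{c+2}$ to a combination of mixed basic commutators, i.e. into $V_3$. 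Finally I would combine this with the previous paragraph: applying the projection of $V_1\oplus V_2\oplus V_3$ onto $V_3$ to the decomposition from Proposition \ref{5} kills the images of $F_1^{c+1}$ and $F_2^{c+1}$ and fixes the image of $\sum(F_1*F_2)_{c+1}$, forcing the latter to equal all of $V_3$. Abelianness is then immediate from Theorem \ref{13}. The main obstacle I anticipate is the bookkeeping in this last step, namely making precise that reduction to basic commutators is multidegree preserving and that ``mixedness'' of the weight-$(c+1)$ basic commutators on $\{A,B\}$ transfers to mixedness in $X\cup Y$; everything else is a direct application of Theorem \ref{13} and Proposition \ref{5}.
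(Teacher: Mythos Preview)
Your proposal is correct and follows essentially the same route as the paper: decompose $F^{c+1}$ via Proposition~\ref{5}, apply Theorem~\ref{13} to $F$, $F_1$, and $F_2$ to obtain the basic-commutator bases, and then match the three summands to the three classes of basic commutators. The paper's own proof simply asserts the direct-sum decomposition $F^{c+1}/F^{c+2}\cong (F_1^{c+1}/F_1^{c+2})\oplus(F_2^{c+1}/F_2^{c+2})\oplus\big(\sum(F_1*F_2)_{c+1}+F^{c+2}\big)/F^{c+2}$ without further argument, whereas your multidegree/bigrading remark is precisely what is needed to justify that the three images are $V_1$, $V_2$, $V_3$ and hence independent.
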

\begin{proof}
By Proposition \ref{5}, we have\[
F^{c+1} / F^{c+2}=(F_1^{c+1}+F_2 ^{c+1} +\sum (F_1*F_2)_{c+1}) / F^{c+2}\cong  (F_1^{c+1} / F_1^{c+2}) \oplus \]\[(F_2^{c+1} / F_2^{c+2})\oplus (\sum (F_1*F_2)_{c+1}+ F^{c+2} / F^{c+2}).\]
By Theorem \ref{13}, $F^{c+1} / F^{c+2}$  $,F_1^{c+1} / F_1^{c+2}$ and $F_2^{c+1} / F_2^{c+2}$ are  the abelian Lie algebras with the basis of all basic commutators of weight $ c+1 $ on $X\cup Y,X$ and $Y,$ respectively. We conclude that $ (\sum (F_1*F_2)_{c+1}+F^{c+2}) / F^{c+2} $  is an abelian Lie
algebra with the basis of all basic commutators $ \lambda $ of weight $ c $ in the $X\cup Y $ which involve at least one $ x_i $ and at least one $ y_i, $ as required.
\end{proof}
\begin{cor}\label{81}
Let $F_1$ and $F_2$ be two free Lie algebras  generated by  $X$ and $Y$ with $ d_1 $ and $ d_2 $ elements,  respectively,
and $F = F_1 * F_2.$  Let $ S $ be the set of all basic commutators $ \lambda $ of weight $ c $ in the $X\cup Y $ which involve at least one $ x $ and at least one $ y. $ Suppose that the order is defined as $ x_i<x_j<y_t<y_d  $ for $ i<j $ and $ t<d, $ where
$x_i, $   $x_j\in X  $ and $y_t, y_d\in Y. $
Then  $ \dim \langle S\rangle =l_{d_1+d_2}(c)-l_{d_1}(c)-l_{d_2}(c).$
\end{cor}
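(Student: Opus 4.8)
The plan is to reduce the claim to a counting statement about basic commutators, using Theorem \ref{13} to count the total number and Proposition \ref{7} to guarantee that the elements of $S$ are linearly independent. Since $F=F_1*F_2$ is itself the free Lie algebra on $X\cup Y$, a set with $d_1+d_2$ elements, Theorem \ref{13} tells us that the set $B$ of all basic commutators of weight $c$ on $X\cup Y$ is a basis of $F^c/F^{c+1}$, so that $|B|=l_{d_1+d_2}(c)$. I will then identify $\langle S\rangle$ with the span of $S$ inside $F^c/F^{c+1}$ and show that $\dim\langle S\rangle=|S|$.

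Next I would partition $B$ into three pairwise disjoint subsets: the basic commutators $B_X$ involving only letters of $X$, the basic commutators $B_Y$ involving only letters of $Y$, and the remaining ones, which are precisely the elements of $S$. The point that makes this partition exact is the prescribed ordering $x_i<x_j<y_t<y_d$: because every generator of $X$ precedes every generator of $Y$, the inductive construction of basic commutators, when restricted to commutators in the letters of $X$ alone, coincides verbatim with the construction of basic commutators in the free Lie algebra $F_1$ on $X$, and likewise for $Y$ and $F_2$. Consequently $B_X$ is exactly the set of basic commutators of weight $c$ on $X$, whence $|B_X|=l_{d_1}(c)$ by Theorem \ref{13}, and symmetrically $|B_Y|=l_{d_2}(c)$. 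Disjointness being clear, inclusion–exclusion yields $|S|=|B|-|B_X|-|B_Y|=l_{d_1+d_2}(c)-l_{d_1}(c)-l_{d_2}(c)$.

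To pass from the cardinality of $S$ to the dimension of $\langle S\rangle$, I would invoke Proposition \ref{7} with $c$ there replaced by $c-1$: it shows that the images of the basic commutators of weight $c$ involving at least one $x$ and at least one $y$ form a basis of $(\sum(F_1*F_2)_{c}+F^{c+1})/F^{c+1}$, and in particular these commutators are linearly independent in $F^c/F^{c+1}$. Therefore $\dim\langle S\rangle=|S|$, and the stated formula follows immediately.

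The step I expect to be the main obstacle is the verification, in the second paragraph, that the chosen ordering really forces $B_X$ and $B_Y$ to coincide with the full sets of basic commutators on $X$ and on $Y$ respectively. This is exactly where the hypothesis on the order is essential: the notion of a basic commutator is ordering-dependent, and an interleaved ordering of the $x$'s and $y$'s could in principle yield basic commutators on $X\cup Y$ whose $X$-only instances fail to satisfy the bracketing condition of Definition \ref{3}. Once this bookkeeping is settled, the remainder is a routine dimension count.
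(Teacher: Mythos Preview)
Your argument is correct and matches the paper's: both observe that $S$ is exactly the complement, within the set of all basic commutators of weight $c$ on $X\cup Y$, of those lying entirely in $X$ or entirely in $Y$, and then count via Theorem~\ref{13}. Your separate appeal to Proposition~\ref{7} for linear independence is unnecessary, since Theorem~\ref{13} already asserts that the full set of weight-$c$ basic commutators is a basis of $F^{c}/F^{c+1}$, so any subset---in particular $S$---is automatically linearly independent.
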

\begin{proof}
$ S $ is the set of all basic commutators $ \lambda $ of weight $ c $ in the $X\cup Y $ such that $ \lambda $ is not basic commutator of weight $ c $ on the set $X$ or $ Y. $ By Theorem \ref{13}, $ \dim \langle S\rangle =l_{d_1+d_2}(c)-l_{d_1}(c)-l_{d_2}(c).$ The result follows.
\end{proof}

\begin{cor}\label{8}
Let $F_1$ and $F_2$ be two free Lie algebras  generated by the set $X$ and $Y,$ respectively,
and $F = F_1 * F_2.$  Let $ S $ be the set of all basic commutators $ \lambda $ of weight $ c+1 $ on $X\cup Y $ which involve at least one $ x_i $ and at least one $ y_i. $ Let we have the order
$ x_i<x_j<y_t<y_d  $ for $ i<j $ and $ t<d, $ where
$x_i, $   $x_j\in X  $ and $y_t, y_d\in Y. $
Then  $\sum (F_1*F_2)_{c+1}=\sum (F_1*F_2)_{c+2}+\langle S\rangle.$
\end{cor}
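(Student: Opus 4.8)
The plan is to prove the two inclusions separately: the containment $\supseteq$ is essentially formal, while for $\subseteq$ I would feed Proposition \ref{7} into the Dedekind modular law and reduce the whole statement to a single directness fact about the basic commutator basis of Theorem \ref{13}. First I would dispose of $\sum(F_1*F_2)_{c+2}+\langle S\rangle\subseteq\sum(F_1*F_2)_{c+1}$: every $\lambda\in S$ is a basic commutator of weight $c+1$ on $X\cup Y$ involving both letters, hence is among the generators listed in Definition \ref{3}, so $\langle S\rangle\subseteq\sum(F_1*F_2)_{c+1}$; and Lemma \ref{4} gives $\sum(F_1*F_2)_{c+2}=[\sum(F_1*F_2)_{c+1},F]\subseteq\sum(F_1*F_2)_{c+1}$ since $\sum(F_1*F_2)_{c+1}$ is an ideal.

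For the reverse inclusion I would use Proposition \ref{7}: as the images of the elements of $S$ span $(\sum(F_1*F_2)_{c+1}+F^{c+2})/F^{c+2}$ and $S\subseteq\sum(F_1*F_2)_{c+1}$, every element of $\sum(F_1*F_2)_{c+1}$ differs from an element of $\langle S\rangle$ by something in $F^{c+2}$, whence $\sum(F_1*F_2)_{c+1}\subseteq\langle S\rangle+F^{c+2}$. Intersecting with $\sum(F_1*F_2)_{c+1}$ and applying the modular law (legitimate because $\langle S\rangle\subseteq\sum(F_1*F_2)_{c+1}$) gives
\[\sum(F_1*F_2)_{c+1}=\langle S\rangle+\bigl(\sum(F_1*F_2)_{c+1}\cap F^{c+2}\bigr).\]
So it remains to show $\sum(F_1*F_2)_{c+1}\cap F^{c+2}=\sum(F_1*F_2)_{c+2}$. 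Writing $F^{c+2}=(F_1^{c+2}+F_2^{c+2})+\sum(F_1*F_2)_{c+2}$ by Proposition \ref{5} and using the modular law once more (now with $\sum(F_1*F_2)_{c+2}\subseteq\sum(F_1*F_2)_{c+1}$), this reduces to
\[\sum(F_1*F_2)_{c+1}\cap\bigl(F_1^{c+2}+F_2^{c+2}\bigr)\subseteq\sum(F_1*F_2)_{c+2},\]
and I would in fact establish the cleaner and stronger $\sum(F_1*F_2)_{c+1}\cap(F_1+F_2)=0$.

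This directness statement is where the real work lies and is the step I expect to be the main obstacle, since it is the one place where the intrinsic structure of the free Lie algebra, rather than formal ideal manipulation, must be invoked. The idea is twofold. On one hand, iterating Lemma \ref{4} gives $\sum(F_1*F_2)_{c+1}\subseteq\sum(F_1*F_2)_{2}=[F_1,F_2]$ (the value recorded after Definition \ref{3}), and $[F_1,F_2]$ is contained in the span $W$ of those basic commutators on $X\cup Y$ that involve at least one $x_i$ and at least one $y_j$: indeed $[F_1,F_2]$ is spanned by brackets of $X$-homogeneous and $Y$-homogeneous elements, each of which is multihomogeneous of a degree positive in some $x_i$ and some $y_j$, so by the $X\cup Y$-multigrading its expansion in the basis of Theorem \ref{13} uses only mixed basic commutators. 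On the other hand, the prescribed order $x_i<x_j<y_t<y_d$ forces the basic commutators on $X\cup Y$ that involve only $X$ (resp.\ only $Y$) to coincide with the basic commutators of $F_1$ (resp.\ $F_2$), so Theorem \ref{13} presents $F=F_1\oplus F_2\oplus W$ as vector spaces.

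Consequently $W\cap(F_1+F_2)=0$, and a fortiori $\sum(F_1*F_2)_{c+1}\cap(F_1+F_2)=0$. Feeding this back through the two modular-law reductions yields $\sum(F_1*F_2)_{c+1}\cap F^{c+2}=\sum(F_1*F_2)_{c+2}$, and hence
\[\sum(F_1*F_2)_{c+1}=\langle S\rangle+\sum(F_1*F_2)_{c+2},\]
which is the asserted equality. The only delicate point to write carefully is the multihomogeneity argument placing $[F_1,F_2]$ inside $W$; everything else is either the modular law or a direct appeal to Propositions \ref{5} and \ref{7} and Lemma \ref{4}.
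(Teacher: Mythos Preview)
Your proof is correct and follows essentially the same route as the paper: both derive $\sum(F_1*F_2)_{c+1}=\langle S\rangle+\bigl(\sum(F_1*F_2)_{c+1}\cap F^{c+2}\bigr)$ from Proposition~\ref{7} via the modular law, then substitute the decomposition of $F^{c+2}$ from Proposition~\ref{5} and apply the modular law once more. The paper's argument stops at $\sum(F_1*F_2)_{c+1}=\bigl((F_1^{c+2}+F_2^{c+2})\cap\sum(F_1*F_2)_{c+1}\bigr)+\sum(F_1*F_2)_{c+2}+\langle S\rangle$ and simply writes ``as required,'' whereas you go further and supply the missing justification that $(F_1+F_2)\cap\sum(F_1*F_2)_{c+1}=0$ via the multigrading/basic-commutator directness argument.
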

\begin{proof}
By Proposition \ref{7}, we have $(\sum (F_1*F_2)_{c+1}+ F^{c+2})/  F^{c+2}=(\langle S\rangle + F^{c+2})/  F^{c+2}$ and so $ \sum (F_1*F_2)_{c+1}+ F^{c+2}= \langle S\rangle + F^{c+2}.$ Thus \[\sum (F_1*F_2)_{c+1}= (\langle S\rangle + F^{c+2})\cap \sum (F_1*F_2)_{c+1}= (\sum (F_1*F_2)_{c+1} \cap  F^{c+2})+\langle S\rangle.\]
By Propositions \ref{5}, $\sum (F_1*F_2)_{c+1}= \Big{(}\big{(}F_1^{c+2}+F_2^{c+2}+ \sum (F_1*F_2)_{c+2}\big{)}\cap \sum (F_1*F_2)_{c+1} \Big{)} +\langle S\rangle=\Big{(}\big{(}F_1^{c+2}+F_2^{c+2} \big{)}\cap  \sum (F_1*F_2)_{c+1} \Big{)} + \sum (F_1*F_2)_{c+2}  +\langle S\rangle,$ as required.
\end{proof}

\section{The $c$-nilpotent multiplier of a direct sum of Lie Algebras}

 In this section, we study the $c$-nilpotent multiplier with respect to the direct sum of two Lie algebras, and then we give some results on the $c$-capability of the direct sum of two Lie algebras. The techniques are used here are based on the notion of free products of Lie algebras
and the expansion of an element of a free Lie algebra in terms of basic commutators. Every
element of a free Lie algebra can be expressed as a sum of some basic commutators. For the
elements of a free product of two free Lie algebras also we have a similar expression except
that the basic commutators are on the union of the bases of the two free Lie algebras taken
to form the free product. It is easy to see that the free product of two free Lie algebras is
actually a free Lie algebra on the disjoint union of the bases of the two chosen free Lie algebras (see \cite{22b} for more details). \\
Let $ L_1 $ and $ L_2 $ be two Lie algebras with  the following free presentations
\[0\rightarrow R_1 \rightarrow F_1 \rightarrow L_1\rightarrow 0~ \text{and}~ 0\rightarrow R_2 \rightarrow F_2 \rightarrow L_2\rightarrow 0,\]
respectively.
Then the free presentation of the direct sum  $ L_1\oplus L_2 $ is given in the following.
\begin{lem}\cite[Lemma 2.1]{21b}\label{1}
Let $ F=F_1*F_2 $ be the free product of two free Lie algebras $ F_1 $ and $ F_2.$ Then
$0\rightarrow R \rightarrow F \rightarrow L_1 \oplus L_2\rightarrow 0$
is the free presentation for $ L_1 \oplus L_2 $ in which $ R=R_1+R_2+[F_2,F_1]. $
\end{lem}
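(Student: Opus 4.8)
The plan is to produce a surjection $\phi\colon F \to L_1\oplus L_2$ out of the universal property of the free product and then identify its kernel with $R$. Since, as remarked in the text, $F = F_1*F_2$ is itself a free Lie algebra (on the disjoint union of the free generating sets $X$ of $F_1$ and $Y$ of $F_2$), it suffices to exhibit such a $\phi$ with $\ker\phi = R$: this makes $0 \to R \to F \to L_1\oplus L_2 \to 0$ a free presentation. To build $\phi$, I would compose the given surjections $\pi_i\colon F_i\to L_i$ with the canonical inclusions $L_1\hookrightarrow L_1\oplus L_2$, $x\mapsto(x,0)$, and $L_2\hookrightarrow L_1\oplus L_2$, $y\mapsto(0,y)$, and use the universal property of $F_1*F_2$ to obtain a unique homomorphism $\phi$ restricting to these two maps on the factors. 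Surjectivity is immediate, since $\phi(F_1)=L_1\oplus 0$ and $\phi(F_2)=0\oplus L_2$ together generate $L_1\oplus L_2$.

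The core ingredient is a vector-space decomposition of $F$. Viewing $F$ as the free Lie algebra on $X\cup Y$ and using its basic-commutator basis (Theorem \ref{13}), I would record $F = F_1 \oplus F_2 \oplus [F_2,F_1]$ as vector spaces: here $F_1$ is the span of the basic commutators in $X$ alone, $F_2$ the span of those in $Y$ alone, and $[F_2,F_1]$ --- which I read as the ideal generated by the mixed brackets $[y,x]$ with $y\in F_2$, $x\in F_1$ --- is by Lemma \ref{4} exactly the span of the basic commutators involving both an $x_i$ and a $y_j$. The directness of the sum is nothing but the linear independence of distinct basic commutators. Along this decomposition $\phi$ acts as $\pi_1$ on $F_1$ (with kernel $R_1$), as $\pi_2$ on $F_2$ (with kernel $R_2$), and annihilates $[F_2,F_1]$: indeed the mixed brackets satisfy $\phi([y,x]) \in [0\oplus L_2,\,L_1\oplus 0] = 0$ because the two summands of a direct sum commute, and since $\ker\phi$ is an ideal containing every mixed bracket it contains the whole ideal $[F_2,F_1]$ they generate.

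With these facts the two kernel inclusions are quick. For $R\subseteq\ker\phi$: $R_1$ and $R_2$ lie in $\ker\phi$ because $\pi_1(R_1)=0$ and $\pi_2(R_2)=0$, and $[F_2,F_1]\subseteq\ker\phi$ by the previous paragraph. For $\ker\phi\subseteq R$: given $w\in\ker\phi$, I would write $w=w_1+w_2+m$ with $w_1\in F_1$, $w_2\in F_2$, $m\in[F_2,F_1]$ according to the decomposition; then $0=\phi(w)=(\pi_1 w_1,\,\pi_2 w_2)$, so directness of $L_1\oplus L_2$ forces $\pi_1 w_1=0$ and $\pi_2 w_2=0$, i.e. $w_1\in R_1$ and $w_2\in R_2$, whence $w\in R_1+R_2+[F_2,F_1]=R$. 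This gives $\ker\phi=R$ and establishes the claimed free presentation.

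I expect the only genuine obstacle to be the decomposition $F = F_1\oplus F_2\oplus[F_2,F_1]$: one must check that $[F_2,F_1]$, taken as the ideal generated by the mixed brackets, coincides with the span of the mixed basic commutators and that it meets $F_1+F_2$ trivially. Both points reduce to the basic-commutator basis of Theorem \ref{13} and the ideal description in Lemma \ref{4}; once they are in place, the construction of $\phi$ and the two inclusions are purely formal.
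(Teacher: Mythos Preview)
The paper does not supply its own proof of this lemma; it is quoted verbatim from \cite[Lemma~2.1]{21b} and used as a black box. Your proposal therefore fills in an argument the present paper omits, and the strategy---construct $\phi:F\to L_1\oplus L_2$ from the universal property of the free product, check surjectivity, and identify $\ker\phi$ via the vector-space splitting $F=F_1\oplus F_2\oplus[F_2,F_1]$---is correct and is essentially the standard proof.

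One refinement is worth making. You invoke Lemma~\ref{4} to justify that $[F_2,F_1]$ coincides with the span of the mixed basic commutators, but Lemma~\ref{4} only asserts that $\sum(F_1*F_2)_2=[F_2,F_1]$ is an ideal; the identification with the span of mixed basic commutators really comes from iterating Proposition~\ref{7} (or Corollary~\ref{8}) over all weights. A cleaner way to obtain the decomposition, which avoids basic commutators entirely, is to note that the canonical surjection $\psi:F\to F_1\times F_2$ has kernel exactly the ideal $[F_2,F_1]$ and restricts to a linear isomorphism $F_1+F_2\to F_1\times F_2$; this immediately gives $F=(F_1\oplus F_2)\oplus[F_2,F_1]$ as vector spaces. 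With that in hand, the two kernel inclusions go through exactly as you wrote.
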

The following lemma plays a key role in the main result, also it is different from \cite[Proposition 2.1]{salj}, since we use the concept of free products in order to obtain the free presentation of a direct sum of Lie algebras.\\
By applying Lemma \ref{1} and the above notation, we can compute the $c$-nilpotent multiplier of $ L_1 \oplus L_2 $ in terms
of $F_i $'s and $R_i$'s as follows
\[\mathcal{M}^{(c)}(L_1\oplus L_2)=\frac{ R\cap F^{c+1}}{[R,_c F]}=\frac{(R_1+R_2+[F_2,F_1] \cap (F_1*F_2)^{c+1})}{[R_1+R_2+[F_2,F_1],_c F_1*F_2]}.\]
Define  \begin{equation}\label{eq}
\eta:
\mathcal{M}^{(c)}(L_1\oplus L_2)=\frac{ R\cap F^{c+1}}{[R,_c F]}\rightarrow \frac{ R_1\cap F_1^{c+1}}{[R_1,_c F_1]}\oplus
\frac{ R_2\cap F_2^{c+1}}{[R_2,_c F_2]}=\mathcal{M}^{(c)}(L_1)\oplus \mathcal{M}^{(c)}( L_2).
\end{equation} which is induced by the canonical homomorphism
from $ F=F_1*F_2\rightarrow F_1\times F_2.$  Then we have
\begin{lem}\label{2}
Let $ L_1 $ and $ L_2 $ be two Lie algebras. Then
\[\mathcal{M}^{(c)}(L_1\oplus L_2)\cong \mathcal{M}^{(c)}(L_1) \oplus \mathcal{M}^{(c)}( L_2)\oplus K,\]
where   $K=\mathrm{ker} \eta.$
\end{lem}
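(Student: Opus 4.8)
The plan is to exhibit the map $\eta$ from \eqref{eq} as a \emph{split} surjection, so that the direct-sum decomposition follows immediately from the splitting lemma for abelian Lie algebras (recall $\mathcal{M}^{(c)}$ is always abelian, so every short exact sequence of such objects that splits gives a direct sum). Concretely, the claim $\mathcal{M}^{(c)}(L_1\oplus L_2)\cong \mathcal{M}^{(c)}(L_1)\oplus\mathcal{M}^{(c)}(L_2)\oplus K$ with $K=\ker\eta$ reduces to showing that $\eta$ is surjective and that the surjection $\eta\colon \mathcal{M}^{(c)}(L_1\oplus L_2)\twoheadrightarrow \mathcal{M}^{(c)}(L_1)\oplus\mathcal{M}^{(c)}(L_2)$ admits a right inverse.

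First I would check that $\eta$ is well defined. The canonical homomorphism $F=F_1*F_2\to F_1\times F_2$ carries $R=R_1+R_2+[F_2,F_1]$ into $R_1\oplus R_2$ (since $[F_2,F_1]$ maps to zero in the direct product) and carries $F^{c+1}$ into $F_1^{c+1}\oplus F_2^{c+1}$, using Proposition \ref{5} to see that the mixed part $\sum(F_1*F_2)_{c+1}$ dies under projection to each factor; likewise $[R,{}_c F]$ maps into $[R_1,{}_c F_1]\oplus[R_2,{}_c F_2]$. Hence the induced map on the quotients $(R\cap F^{c+1})/[R,{}_c F]\to \bigoplus_i (R_i\cap F_i^{c+1})/[R_i,{}_c F_i]$ exists. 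Surjectivity is then transparent: the natural inclusions $F_i\hookrightarrow F_1*F_2$ send $R_i\cap F_i^{c+1}$ into $R\cap F^{c+1}$ and $[R_i,{}_c F_i]$ into $[R,{}_c F]$, inducing maps $\sigma_i\colon \mathcal{M}^{(c)}(L_i)\to \mathcal{M}^{(c)}(L_1\oplus L_2)$; composing $\eta$ with $\sigma_1+\sigma_2$ recovers the identity on each summand because the composite $F_i\hookrightarrow F_1*F_2\to F_1\times F_2$ is the identity on the $i$-th factor. This simultaneously proves surjectivity of $\eta$ and produces the splitting $\sigma=\sigma_1+\sigma_2$.

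The main obstacle is verifying that $\sigma_i$ is well defined, i.e.\ that the inclusion $F_i\hookrightarrow F$ really does map $R_i\cap F_i^{c+1}$ into $R\cap F^{c+1}$ \emph{modulo} $[R,{}_c F]$ in a way compatible with the relations. The inclusion $R_i\cap F_i^{c+1}\subseteq R\cap F^{c+1}$ is immediate from $R_i\subseteq R$ and $F_i^{c+1}\subseteq F^{c+1}$, and $[R_i,{}_c F_i]\subseteq [R,{}_c F]$ is equally clear; the delicate point is that no \emph{extra} identifications are forced, so that $\sigma_i$ is injective rather than merely a homomorphism. Because $\eta\circ\sigma_i=\mathrm{id}$ by the factor-projection computation above, each $\sigma_i$ is automatically injective, which resolves this point without a separate argument.

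Finally, once $\eta$ is a split epimorphism of abelian Lie algebras with section $\sigma$, the short exact sequence
\begin{equation*}
0\rightarrow K \rightarrow \mathcal{M}^{(c)}(L_1\oplus L_2)\stackrel{\eta}{\rightarrow} \mathcal{M}^{(c)}(L_1)\oplus\mathcal{M}^{(c)}(L_2)\rightarrow 0
\end{equation*}
splits, yielding $\mathcal{M}^{(c)}(L_1\oplus L_2)\cong K\oplus\bigl(\mathcal{M}^{(c)}(L_1)\oplus\mathcal{M}^{(c)}(L_2)\bigr)$ with $K=\ker\eta$, exactly as stated. I expect the whole proof to be short, with the only real content being the bookkeeping in the first two paragraphs; the actual computation of $K$ is deferred to the subsequent main theorem, which is presumably where Corollary \ref{8} and the basic-commutator count of Corollary \ref{81} come into play.
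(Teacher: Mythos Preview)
Your proposal is correct and follows essentially the same approach as the paper: the paper defines the section $\beta\colon \mathcal{M}^{(c)}(L_1)\oplus\mathcal{M}^{(c)}(L_2)\to\mathcal{M}^{(c)}(L_1\oplus L_2)$ by $(x_1+[R_1,_cF_1],\,x_2+[R_2,_cF_2])\mapsto x_1+x_2+[R,_cF]$, which is exactly your $\sigma=\sigma_1+\sigma_2$ coming from the inclusions $F_i\hookrightarrow F$, and then concludes by the splitting lemma just as you do. Your write-up is more careful about well-definedness than the paper, but the argument is the same.
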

\begin{proof}
Let $ F=F_1*F_2.$ Then the epimorphism $ F\rightarrow F_1\times F_2 $ induces the above epimorphism $ \eta.$
Consider the map
\[\beta:\frac{ R_1\cap F_1^{c+1}}{[R_1,_c F_1]}\oplus \frac{ R_2\cap F_2^{c+1}}{[R_2,_c F_2]}\rightarrow \frac{ R\cap F^{c+1}}{[R,_c F]} \]
defined by $ (x_1+[R_1,_c F_1], x_2+[R_2,_c F_2]) \mapsto x_1+x_2+[R,_c F].$ Clearly, $ \beta $ is a well-defined homomorphism. It is easy to see that $ \beta $ is a left inverse to $\eta$ in \ref{eq}. Therefore the sequence
\[0\rightarrow K \rightarrow \mathcal{M}^{(c)}(L_1\oplus L_2) \rightarrow \mathcal{M}^{(c)}(L_1) \oplus \mathcal{M}^{(c)}( L_2) \rightarrow 0\]
splits and the result holds.
\end{proof}
We recall that
Now we compute the kernel of the epimorphism $ \eta $ in \ref{eq}.
\begin{thm}\label{6}
Let
\[\eta:\mathcal{M}^{(c)}(L_1\oplus L_2)\rightarrow \mathcal{M}^{(c)}(L_1)\oplus \mathcal{M}^{(c)}(L_2)\]
be the epimorphism defined in  \ref{eq}. Then
\[ \ker \eta= \sum (F_1*F_2)_{c+1}+[R,_cF]/[R,_c F].\]
\end{thm}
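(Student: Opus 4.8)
The plan is to prove the equality by establishing both inclusions, the two engines being the decomposition $F^{c+1}=F_1^{c+1}+F_2^{c+1}+\sum(F_1*F_2)_{c+1}$ from Proposition \ref{5} and the canonical homomorphism $\phi=(\phi_1,\phi_2)\colon F=F_1*F_2\to F_1\times F_2$ inducing $\eta$, with components $\phi_i\colon F\to F_i$. The crucial feature of $\phi$ is that it sends $a\in F_1$ to $(a,0)$ and $b\in F_2$ to $(0,b)$, and therefore annihilates every commutator $[F_1,F_2]$.

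First I would treat the inclusion $\supseteq$. Two preliminary facts about $\sum(F_1*F_2)_{c+1}$ are needed. On one hand, Lemma \ref{4} shows that each $\sum(F_1*F_2)_k$ with $k\geq 2$ is an ideal and $\sum(F_1*F_2)_{k+1}=[\sum(F_1*F_2)_k,F]\subseteq\sum(F_1*F_2)_k$, so descending the chain gives $\sum(F_1*F_2)_{c+1}\subseteq\sum(F_1*F_2)_2=[F_1,F_2]\subseteq R$; on the other hand Proposition \ref{5} gives $\sum(F_1*F_2)_{c+1}\subseteq F^{c+1}$. Hence $\sum(F_1*F_2)_{c+1}\subseteq R\cap F^{c+1}$, and its elements represent classes in $\mathcal{M}^{(c)}(L_1\oplus L_2)$. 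Since moreover $[F_1,F_2]\subseteq\ker\phi$, we get $\sum(F_1*F_2)_{c+1}\subseteq\ker\phi$, so $\eta$ sends each such class to $0$, and the inclusion $\supseteq$ follows (together with the trivial $[R,_c F]\subseteq\ker\eta$).

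For the reverse inclusion I would start from a representative $z\in R\cap F^{c+1}$ of an element of $\ker\eta$ and write $z=z_1+z_2+m$ with $z_1\in F_1^{c+1}$, $z_2\in F_2^{c+1}$ and $m\in\sum(F_1*F_2)_{c+1}$, using Proposition \ref{5}. Applying $\phi$ and using $\phi(m)=0$ together with $\phi(a)=(a,0)$ for $a\in F_1$ and $\phi(b)=(0,b)$ for $b\in F_2$ shows $z_1=\phi_1(z)$ and $z_2=\phi_2(z)$, so the two components of $\eta(z+[R,_c F])$ are precisely the classes of $z_1$ and $z_2$. The hypothesis $z+[R,_c F]\in\ker\eta$ then forces $z_1\in[R_1,_c F_1]$ and $z_2\in[R_2,_c F_2]$; since $R_i\subseteq R$ and $F_i\subseteq F$ we have $[R_i,_c F_i]\subseteq[R,_c F]$, whence $z=(z_1+z_2)+m\in[R,_c F]+\sum(F_1*F_2)_{c+1}$, as required.

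I expect the work to be bookkeeping rather than conceptual, and the point that needs the most care is the interaction between $\eta$ and the decomposition. One must check that $\eta$ is well defined on the quotients (immediate from $\phi_i(R)=R_i$ and $\phi_i(F)=F_i$, which give $\phi_i([R,_c F])=[R_i,_c F_i]$), and one must read off $z_1=\phi_1(z)$ and $z_2=\phi_2(z)$ so that the kernel condition delivers $z_1\in[R_1,_c F_1]$ and $z_2\in[R_2,_c F_2]$ \emph{directly}, without assuming a priori that $z_1\in R_1$ or $z_2\in R_2$. The disjointness of the basic-commutator families (pure in $X$, pure in $Y$, and mixed) supplied by Theorem \ref{13} and Corollary \ref{8} guarantees that the sum of Proposition \ref{5} is direct, so that the components $z_1,z_2,m$ are unambiguous.
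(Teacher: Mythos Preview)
Your proof is correct and follows the same route as the paper's: decompose a representative of a kernel element via Proposition~\ref{5} into pieces in $F_1^{c+1}$, $F_2^{c+1}$, and $\sum(F_1*F_2)_{c+1}$, then use the projections $\phi_i$ to see that the first two pieces lie in $[R_i,_c F_i]$. Your write-up is more careful than the paper's (which declares the inclusion $\supseteq$ to be clear), and your final remark on the directness of the sum is harmless but unnecessary---your own computation $z_i=\phi_i(z)$ already works from any decomposition $z=z_1+z_2+m$, unique or not.
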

\begin{proof}
Clearly
$(\sum (F_1*F_2)_{c+1}+[R,_c F])/[R,_c F]\subseteq \ker \eta.$
Let $  w+ [R,_c F]\in \ker \eta.  $ Using Proposition \ref{5}, we have  $ w+ [R,_c F]=a+b+z+[R,_c F]\in \ker \eta $ such that
$ a\in   F_1^{c+1},$ $ b\in F_2^{c+1} $ and
$z\in \sum (F_1*F_2)_{c+1}.$ The definition of $ \eta  $ implies
$a\in  [R_1,_c F_1]$ and $ b\in  [R_2,_c F_2]$ so that $ w+[R,_c F] =z+[R,_c F],$ as required.
\end{proof}
The following corollary is an immediate consequence of Lemma \ref{2} and Theorem \ref{6}.
\begin{cor}\label{61}
Let $ L_1 $ and $ L_2 $ be two Lie algebras. Then
\[\mathcal{M}^{(c)}(L_1\oplus L_2)\cong \mathcal{M}^{(c)}(L_1) \oplus \mathcal{M}^{(c)}( L_2)\oplus \big{( }\sum(F_1*F_2)_{c+1}+[R,_cF]\big{)}/[R,_c F].\]
\end{cor}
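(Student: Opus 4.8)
The plan is to combine the two preceding results by direct substitution, which is exactly why the statement is flagged as an immediate consequence. First I would invoke Lemma \ref{2}, which furnishes the splitting
\[\mathcal{M}^{(c)}(L_1\oplus L_2)\cong \mathcal{M}^{(c)}(L_1) \oplus \mathcal{M}^{(c)}( L_2)\oplus K,\]
with $K=\ker\eta$, where $\eta$ is the epimorphism fixed in \ref{eq}. Recall that the content of Lemma \ref{2} is that $\eta$ admits the explicit left inverse $\beta$, so the short exact sequence
\[0\rightarrow K \rightarrow \mathcal{M}^{(c)}(L_1\oplus L_2) \rightarrow \mathcal{M}^{(c)}(L_1) \oplus \mathcal{M}^{(c)}( L_2) \rightarrow 0\]
splits; this already reduces the corollary to identifying the kernel summand $K$ explicitly.

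Next I would apply Theorem \ref{6}, in which the same map $\eta$ is analysed and its kernel is computed to be
\[\ker\eta=\big(\sum(F_1*F_2)_{c+1}+[R,_cF]\big)/[R,_c F].\]
Substituting this expression for $K$ into the decomposition provided by Lemma \ref{2} immediately yields the asserted isomorphism, and no further computation is required.

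The only point demanding care—and it is not really an obstacle—is that the homomorphism $\eta$ named in Lemma \ref{2} and the one studied in Theorem \ref{6} are literally the single map defined in \ref{eq}; hence the two occurrences of $\ker\eta$ denote the same subalgebra and the substitution is legitimate. Since both ingredients have already been established, I do not foresee any genuine difficulty in assembling the statement.
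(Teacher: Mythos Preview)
Your proposal is correct and matches the paper's own justification exactly: the paper states that this corollary is an immediate consequence of Lemma \ref{2} and Theorem \ref{6}, which is precisely the substitution you describe. Your additional remark that the map $\eta$ in both results is literally the same map from \ref{eq} is the only consistency check needed, and it is valid.
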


\begin{lem}\label{87}
Let $F_1$ and $F_2$ be two free Lie algebras  generated by the set $X$ and $Y,$ respectively,
and $F = F_1 * F_2.$  Let $ S $ be the set of all basic commutators $ \lambda $ of weight $ c+1 $ on $X\cup Y $ which involve at least one $ x_i $ and at least one $ y_i. $ Let we have the order
$ x_i<x_j<y_t<y_d  $ for $ i<j $ and $ t<d, $ where
$x_i, $   $x_j\in X  $ and $y_t, y_d\in Y. $
Then   $\sum (F_1*F_2)_{c+1}+[R,_cF]/[R,_cF]=\langle S\rangle+[R,_cF]/[R,_cF].$
\end{lem}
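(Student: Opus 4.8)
I want to prove that
$$\sum(F_1*F_2)_{c+1}+[R,_cF]\big/[R,_cF]=\langle S\rangle+[R,_cF]\big/[R,_cF].$$

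**What I have.** Corollary 8 gives me an equation relating $\sum(F_1*F_2)_{c+1}$ to $\langle S\rangle$, $\sum(F_1*F_2)_{c+2}$, and some intersection with $F_1^{c+2}+F_2^{c+2}$. Specifically:
$$\sum(F_1*F_2)_{c+1}=\Big((F_1^{c+2}+F_2^{c+2})\cap\sum(F_1*F_2)_{c+1}\Big)+\sum(F_1*F_2)_{c+2}+\langle S\rangle.$$

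So modulo $[R,_cF]$, I need to show that the terms $(F_1^{c+2}+F_2^{c+2})\cap\sum(F_1*F_2)_{c+1}$ and $\sum(F_1*F_2)_{c+2}$ both land inside $\langle S\rangle+[R,_cF]$.

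**Key observation about $[R,_cF]$.** $R=R_1+R_2+[F_2,F_1]$ (Lemma 1). The term $[R,_cF]$ is built by taking $c$-fold brackets. I suspect $\sum(F_1*F_2)_{c+2}\subseteq[R,_cF]$, because elements of $\sum(F_1*F_2)_{c+2}$ have weight $\geq c+2$ and mix $A,B$ letters — so they should be expressible via the $[F_2,F_1]$ part of $R$ bracketed $c$ times. Let me think about this more carefully...

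Let me reconsider.

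---

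The plan is to reduce the asserted equality to a single containment by invoking Corollary~\ref{8}, and then to verify that containment by unwinding Lemma~\ref{4}. Both sides of the claimed identity contain $[R,_cF]$, so it is enough to prove the equality of numerators modulo $[R,_cF]$, namely that $\sum(F_1*F_2)_{c+1}+[R,_cF]=\langle S\rangle+[R,_cF]$ as subspaces of $F$. Corollary~\ref{8} provides the decomposition $\sum(F_1*F_2)_{c+1}=\sum(F_1*F_2)_{c+2}+\langle S\rangle$, and substituting this gives
\[\sum(F_1*F_2)_{c+1}+[R,_cF]=\sum(F_1*F_2)_{c+2}+\langle S\rangle+[R,_cF].\]
Consequently the entire lemma collapses to the single containment $\sum(F_1*F_2)_{c+2}\subseteq[R,_cF]$.

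To establish this containment I would peel $\sum(F_1*F_2)_{c+2}$ all the way down to weight two. Applying Lemma~\ref{4} repeatedly yields $\sum(F_1*F_2)_{c+2}=[\sum(F_1*F_2)_{c+1},F]=\cdots=[\sum(F_1*F_2)_2,_cF]$, where the weight drops by one at each of the $c$ bracketings. Since $\sum(F_1*F_2)_2=[F_1,F_2]=[F_2,F_1]$ (the value recorded immediately after Definition~\ref{3}), this reads
\[\sum(F_1*F_2)_{c+2}=[[F_2,F_1],_cF].\]
Now I bring in Lemma~\ref{1}, which identifies the relation ideal as $R=R_1+R_2+[F_2,F_1]$; in particular $[F_2,F_1]\subseteq R$. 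Bracketing this inclusion $c$ times with $F$ respects containment, so $[[F_2,F_1],_cF]\subseteq[R,_cF]$, and therefore $\sum(F_1*F_2)_{c+2}\subseteq[R,_cF]$, exactly as required.

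Combining the two ingredients, $\sum(F_1*F_2)_{c+1}+[R,_cF]=\sum(F_1*F_2)_{c+2}+\langle S\rangle+[R,_cF]=\langle S\rangle+[R,_cF]$, and passing to the quotient by $[R,_cF]$ completes the argument. The only genuinely substantive step is the identification $\sum(F_1*F_2)_{c+2}=[[F_2,F_1],_cF]$ obtained from the iterated form of Lemma~\ref{4}: once one recognizes that $\sum(F_1*F_2)_{c+2}$ is generated from the weight-two mixed brackets $[F_2,F_1]$ by $c$ further commutators with $F$, its absorption into $[R,_cF]$ is forced by the shape of $R$ in Lemma~\ref{1}. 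Everything else is routine bookkeeping about which basic commutators fall into $\langle S\rangle$ as opposed to the higher-weight piece $\sum(F_1*F_2)_{c+2}$.
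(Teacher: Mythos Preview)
Your argument is correct and essentially identical to the paper's own proof: both invoke Corollary~\ref{8} to write $\sum(F_1*F_2)_{c+1}=\sum(F_1*F_2)_{c+2}+\langle S\rangle$, then use Lemma~\ref{4} (iterated) to identify $\sum(F_1*F_2)_{c+2}$ with $[[F_2,F_1],_cF]$, and finally absorb this into $[R,_cF]$ via the inclusion $[F_2,F_1]\subseteq R$ from Lemma~\ref{1}. The only cosmetic difference is that the paper phrases the middle step as the containment $[\sum(F_1*F_2)_{c+1},F]\subseteq[[F_2,F_1],_cF]$ rather than spelling out the full chain of equalities, but the content is the same.
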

\begin{proof}
By Corollary \ref{8}, we have $\sum (F_1*F_2)_{c+1}=\sum (F_1*F_2)_{c+2}+\langle S\rangle.$
Also by Lemma \ref{4}, we have $\sum (F_1*F_2)_{c+2}=[\sum (F_1*F_2)_{c+1},F].  $  Since $ [R,_cF]=[R_1+R_2+[F_2,F_1],_c F], $  so $ \sum (F_1*F_2)_{c+2}=[\sum (F_1*F_2)_{c+1},F]\subseteq [[F_2,F_1],_c F] \subseteq [R,_cF]=[R_1+R_2+[F_2,F_1],_c F].$
The result follows.
\end{proof}
Similar to the following  definition is found in \cite{el5}.
\begin{defn}\label{9}
Let $ c\geq 1, K$ and $ H $ be  two abelian  Lie algebras with bases $ \{a_i\}_{i\in I}  $ and   $ \{b_j\}_{j\in j},$ respectively. Let we have the ordering $ K<H$ and the set of  basic commutators of weight $ c $ on the letters  $ K$ and $H$ which contains at least one $H$ and one $K$, is denoted by $S_1.$ We define
\begin{equation}\label{e1}
 \tau (K,H)_{c}=\oplus_{\lambda \in S_1 }\Big{(}P_1\otimes \ldots\otimes P_{c}\Big{)},
 \end{equation}
where $ P_i=K $ or $ P_i=H.$
\end{defn}
Note that  descriptions of $\tau (K,H)_{c}  $ follows from basic properties of the tensor
products of abelian Lie algebras and the definition of basic commutators.
\begin{align*}
\tau(K, H)_{1}&=0,\\
\tau(K, H)_{2}&=(H\otimes K),\\
\tau(K, H)_{3}&=(H\otimes K\otimes  K)\oplus( H\otimes K\otimes H),\\
\tau(K, H)_{4}&=(H\otimes K\otimes  K\otimes K)\oplus( H\otimes K\otimes K\otimes H)\oplus (H\otimes K\otimes  H\otimes H).
\end{align*}
Multi linearity of the generating elements of $\sum (F_1,F_2)_{c+1} (\mod ~ [R,_{c}F])$ imposes
a connection between  $\sum (F_1,F_2)_{c+1} (mod ~ [R,_{c}F])$ and
$ \tau (L_1^{ab},L_2^{ab})_{c+1}.$
\begin{lem}\label{11}
With the previous notations and assumptions in Proposition \ref{7}, we have
$\tau (F_1^{ab},F_2^{ab})_{c+1}\cong  \sum (F_1,F_2)_{c+1}+[R,_cF]/[R,_c F]$
for $ c\geq 1.$
\end{lem}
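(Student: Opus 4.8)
The plan is to build an explicit isomorphism $\Phi$ from $\tau(F_1^{ab},F_2^{ab})_{c+1}$ onto $\sum(F_1,F_2)_{c+1}+[R,_cF]/[R,_cF]$ by sending each elementary tensor to the coset of the correspondingly bracketed commutator, and then to verify that the two sides carry exactly the same relations. Concretely, for a basic commutator pattern $\lambda\in S_1$ of weight $c+1$ on the two letters $K=F_1^{ab}$ and $H=F_2^{ab}$, and for lifts $u_i\in F_1$ or $u_i\in F_2$ (according as the $i$-th slot of $\lambda$ is $K$ or $H$) of elements $\bar u_i$, I would set
\[
\Phi\big(\bar u_1\otimes\cdots\otimes\bar u_{c+1}\big)=[u_1,\ldots,u_{c+1}]_{\lambda}+[R,_cF],
\]
extend linearly over each tensor summand, and then over the direct sum indexed by $S_1$.

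First I would check that $\Phi$ is well defined. Multilinearity of the Lie bracket in each slot is immediate and is precisely what lets the map factor through the tensor product $P_1\otimes\cdots\otimes P_{c+1}$. The substantive point is independence of the chosen lifts: if some $\bar u_i=0$ in $F_{j_i}^{ab}$, i.e. $u_i\in F_{j_i}^2$, then $[u_1,\ldots,u_{c+1}]_{\lambda}$ is a commutator of weight at least $c+2$ that genuinely involves both generating sets (since $\lambda\in S_1$ uses both letters), hence lies in $\sum(F_1*F_2)_{c+2}$; by the computation in Lemma \ref{87} (via Lemma \ref{4}) this sits inside $[R,_cF]$, so the coset is unchanged. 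This is exactly the ``multilinearity modulo $[R,_cF]$'' recorded in the remark preceding the statement.

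Surjectivity is then straightforward: taking the $u_i$ to be the free generators $x_i\in X$ and $y_j\in Y$ produces precisely the basic commutators of weight $c+1$ on $X\cup Y$ involving at least one $x$ and at least one $y$, and by Lemma \ref{87} these cosets span $\sum(F_1,F_2)_{c+1}+[R,_cF]/[R,_cF]$.

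The crux, and where I expect the real work to lie, is injectivity. For this I would construct a candidate inverse $\Psi$ on the generating cosets $[f_1,\ldots,f_{c+1}]_\lambda+[R,_cF]$ by $\bar f_1\otimes\cdots\otimes\bar f_{c+1}$ in the $\lambda$-summand, and show it is well defined; equivalently, I must show that the only linear relations among the generators of $\sum(F_1,F_2)_{c+1}$ modulo $[R,_cF]$ are the multilinear ones already built into the tensor product together with those killing brackets internal to a single factor. The delicate point is that the Jacobi identity yields relations \emph{between} distinct patterns $\lambda$ (rewriting a non-basic mixed commutator as a combination of basic ones), and one has to verify that, after passing to $F_i^{ab}$ in each slot, these rewritings are compatible with the direct-sum-of-tensors structure of $\tau$ rather than forcing extra collapse. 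Once $\Psi$ is shown to be well defined, the identities $\Psi\Phi=\mathrm{id}$ and $\Phi\Psi=\mathrm{id}$ follow by evaluating on elementary tensors and on the generating cosets respectively, yielding the claimed isomorphism for all $c\geq 1$.
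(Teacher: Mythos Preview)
Your overall strategy---build the map $\Phi$ on elementary tensors, check multilinearity modulo $[R,_cF]$ via Lemma~\ref{87}, then produce a map $\Psi$ in the opposite direction and verify the two are mutually inverse---is exactly what the paper does (its $\alpha_2$ and $\alpha_1$ are your $\Phi$ and $\Psi$). So the approach matches.

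There is, however, a genuine gap, and it comes from following the lemma statement too literally. The statement writes $\tau(F_1^{ab},F_2^{ab})_{c+1}$, but the paper's own proof (and its application in Theorem~\ref{12}) works with $\tau(L_1^{ab},L_2^{ab})_{c+1}$; the $F_i^{ab}$ in the displayed statement is a typo. With $F_i^{ab}$ your injectivity step cannot be completed: since $R=R_1+R_2+[F_2,F_1]$, the ideal $[R,_cF]$ contains $[R_1,_cF]+[R_2,_cF]$. For any $r\in R_1\setminus F_1^2$ and $y\in Y$, a mixed commutator such as $[y,r,\ldots]_{\lambda}$ lies in $[R,_cF]$ and hence represents $0$ in the quotient, yet your $\Psi$ would send it to $\bar y\otimes \bar r\otimes\cdots$, which is nonzero in $\tau(F_1^{ab},F_2^{ab})_{c+1}$ because $\bar r\neq 0$ in $F_1^{ab}$. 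Thus $\Psi$ is not well defined and $\Phi$ is not injective in general.

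The fix is exactly what the paper's proof uses: replace $F_i^{ab}$ by $L_i^{ab}=F_i/(R_i+F_i^2)$. Then your well-definedness argument for $\Phi$ needs one extra (trivial) line---changing a slot by an element of $R_i$ lands the commutator in $[R_i,_cF]\subseteq[R,_cF]$---and $\Psi$ becomes well defined because now $\bar r=0$ in $L_i^{ab}$. With that correction your proof is the paper's proof, carried out with more care than the paper itself provides.
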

\begin{proof}
By Lemma \ref{87}, we have
obviously, the map $ \alpha_1: \sum (F_1,F_2)_{c+1}/[R,_c F]\rightarrow \tau (L_1^{ab},L_2^{ab})_{c+1}$ given by
$ [f_1,\ldots,f_{c+1}]\mapsto  \bar{f_1}\otimes \ldots\bar{f_{c+1}} $ is a Lie homomorphism. Conversely, we may check that
$ \alpha_2: \tau (L_1^{ab},L_2^{ab})_{c+1}\rightarrow \sum (F_1,F_2)_{c+1}/[R,_c F]$ given by
$  \bar{f_1}\otimes \ldots\otimes \bar{f_{c+1}} \mapsto [f_1,\ldots,f_{c+1}]$ is a Lie homomorphism too.
Now $ \eta_1\eta_2$ and $ \eta_2\eta_1 $ are identity, so the result follows.
\end{proof}
The following theorem generalized \cite[Theorems 2.5 and 2.9]{salj} and states a formula for the $c$-nilpotent multiplier of a direct sum of two arbitrary Lie algebras without any restriction on $c$.
\begin{thm}\label{12}
Let $L_1$ and $L_2$ be arbitrary Lie algebras. Then
\[\mathcal{M}^{(c)}(L_1\oplus L_2)\cong  \mathcal{M}^{(c)}(L_1)\oplus \mathcal{M}^{(c)}(L_2) \oplus \tau(L_1^{ab},L_2^{ab})_{c+1}.\]
\end{thm}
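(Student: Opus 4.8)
The plan is to obtain the statement as the formal composite of two results already in hand, Corollary \ref{61} and Lemma \ref{11}, so that the only genuine work is the bookkeeping that converts the free-algebra ``correction term'' into a tensor expression built from the abelianizations $L_1^{ab}$ and $L_2^{ab}$. First I would invoke Corollary \ref{61}, which already produces the splitting
\[
\mathcal{M}^{(c)}(L_1\oplus L_2)\cong \mathcal{M}^{(c)}(L_1)\oplus\mathcal{M}^{(c)}(L_2)\oplus K,\qquad K=\big(\sum(F_1*F_2)_{c+1}+[R,_cF]\big)/[R,_cF].
\]
Thus the entire content of Theorem \ref{12} is concentrated in the single isomorphism $K\cong\tau(L_1^{ab},L_2^{ab})_{c+1}$, and the theorem then follows by substitution.

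Second, I would supply that isomorphism directly from Lemma \ref{11}. By Lemma \ref{87} the space $K$ is generated, modulo $[R,_cF]$, by the classes of the basic commutators in the set $S$ of weight $c+1$ on $X\cup Y$ that involve at least one generator from each of $X$ and $Y$; this ``at least one from each side'' condition is exactly the condition encoded by the index set $S_1$ in Definition \ref{9}. I would then use the two maps of Lemma \ref{11}: $\alpha_1$ sending a generating commutator $[f_1,\dots,f_{c+1}]$ to the elementary tensor $\bar{f_1}\otimes\cdots\otimes\bar{f_{c+1}}$, and $\alpha_2$ reversing this assignment, and conclude from their being mutually inverse Lie homomorphisms that $K\cong\tau(L_1^{ab},L_2^{ab})_{c+1}$.

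The substantive point, which is already discharged inside Lemma \ref{11} but which I would re-examine to be safe, is the well-definedness of $\alpha_1,\alpha_2$ and in particular the passage from $F_i^{ab}$ to $L_i^{ab}$ forced by the denominator. Here one uses $R=R_1+R_2+[F_2,F_1]$, so that $[R,_cF]\supseteq[R_1,_cF]+[R_2,_cF]$: whenever a tensor slot is occupied by an element of $R_1$ (respectively $R_2$), the corresponding commutator already lies in $[R,_cF]$ and hence vanishes in $K$. Consequently each class $\bar{f_i}$ is only well-defined in $F_i/(R_i+F_i^2)=L_i^{ab}$ rather than in $F_i^{ab}$, which is precisely why the correct target is $\tau(L_1^{ab},L_2^{ab})_{c+1}$. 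One must also confirm that multilinearity, antisymmetry, and the Jacobi relations on the left match the defining relations of the tensor object on the right, so that $\alpha_1$ and $\alpha_2$ respect all relations on both sides.

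Finally, I would note that since Corollary \ref{61} and Lemma \ref{11} are established for arbitrary $L_1,L_2$ and arbitrary $c$, no finite-dimensionality of the abelianizations nor any arithmetic hypothesis on $c+1$ enters; substituting $K\cong\tau(L_1^{ab},L_2^{ab})_{c+1}$ into the decomposition of Corollary \ref{61} yields the stated isomorphism and thereby removes the restrictions present in \cite[Theorems 2.5 and 2.9]{salj}.
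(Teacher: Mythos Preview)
Your proposal is correct and follows exactly the paper's own route: the paper's proof of Theorem \ref{12} consists of the single sentence ``The result follows immediately from Corollary \ref{61} and Lemma \ref{11},'' which is precisely the two-step combination you describe. Your additional remarks on well-definedness and the passage from $F_i^{ab}$ to $L_i^{ab}$ simply unpack what is already contained (somewhat tersely) in the paper's proof of Lemma \ref{11}.
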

\begin{proof}
The result  follows immediately from  Corollary \ref{61} and Lemma \ref{11}.
\end{proof}
\section{The $c$-capability of a direct sum of finite dimensional Lie Algebras}
In this section, we are going to determine the $c$-capability of all abelian  Lie algebras. Then we discuss on the $c$-capability  of a direct sum of a finite dimensional non-abelian Lie algebra and an abelian Lie algebra.

Recall that from \cite{sal2} a Lie algebra $ L$ is $c$-capable if there exists some Lie algebra $ H $ such that $L\cong H/Z_c(H),$ where $ Z_c(H)$ is the $c$-th center of $K.$
   Evidently, $L$ is $1$-capable if and only if it is an inner derivation Lie
algebra, and $L$ is $c$-capable ($c\geq 2$) if and only if it is an inner derivation Lie algebra of a $(c-1)$-capable Lie algebra.
\\
In  \cite{sal2}, the $c$-epicenter of a Lie algebra $L, $    $Z^*_c (L),$  is defined to be the smallest ideal $M$ of $L$ such that $L/M$ is $c$-capable. For $c=1,$ the $1$-epicenter of $ L $ is equal to
 $Z^* (L)$ for a Lie algebra $L $ was defined in \cite{alam}.
It is obvious that $Z^*_c (L)$ is a characteristic ideal of $L$ contained in $Z_c(L),$ and $Z^*_c (L/Z^*_c(L))=0.$ So $ L $ is $c$-capable if and only if $Z_c^{*}(L)=0.$\\
The proof of the following lemma is similar to the proof of \cite[Theorem 2.7]{ni3}.
\begin{lem}\label{511}
Let $ A$ and $ B $ be Lie algebras. Then
$ Z_c^{*}(A\oplus B)\subseteq   Z_c^{*}(A)\oplus Z_c^{*}(B)$
for all $ c\geq 1.$
\end{lem}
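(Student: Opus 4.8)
The plan is to exploit the defining minimality property of the $c$-epicenter recalled above: $Z_c^{*}(L)$ is the smallest ideal $M$ of $L$ for which $L/M$ is $c$-capable. Setting $M=Z_c^{*}(A)\oplus Z_c^{*}(B)$, which is an ideal of $A\oplus B$ since $Z_c^{*}(A)$ and $Z_c^{*}(B)$ are ideals of $A$ and $B$, it therefore suffices to show that the quotient $(A\oplus B)/M$ is $c$-capable. Minimality of $Z_c^{*}(A\oplus B)$ then forces $Z_c^{*}(A\oplus B)\subseteq M$, which is exactly the claim.

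First I would identify the quotient via the natural isomorphism
\[(A\oplus B)/\big(Z_c^{*}(A)\oplus Z_c^{*}(B)\big)\cong \big(A/Z_c^{*}(A)\big)\oplus \big(B/Z_c^{*}(B)\big).\]
By the recalled property $Z_c^{*}(L/Z_c^{*}(L))=0$, both $A/Z_c^{*}(A)$ and $B/Z_c^{*}(B)$ are $c$-capable. Thus the problem reduces to the single general fact that the direct sum of two $c$-capable Lie algebras is again $c$-capable.

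To establish that reduction, suppose $A'\cong H_1/Z_c(H_1)$ and $B'\cong H_2/Z_c(H_2)$ witness the $c$-capability of two Lie algebras $A'$ and $B'$. The key auxiliary computation is that the $c$-th center commutes with direct sums, namely $Z_c(H_1\oplus H_2)=Z_c(H_1)\oplus Z_c(H_2)$. I would prove this by induction on $c$: for $c=1$ it is the elementary identity $Z(H_1\oplus H_2)=Z(H_1)\oplus Z(H_2)$, and the inductive step follows from the definition $Z_c(H)/Z_{c-1}(H)=Z(H/Z_{c-1}(H))$ applied to $H=H_1\oplus H_2$ together with the previous case. Granting this, one obtains
\[A'\oplus B'\cong (H_1\oplus H_2)/\big(Z_c(H_1)\oplus Z_c(H_2)\big)=(H_1\oplus H_2)/Z_c(H_1\oplus H_2),\]
so $A'\oplus B'$ is $c$-capable.

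Combining these steps, $(A\oplus B)/M$ is a direct sum of two $c$-capable Lie algebras, hence $c$-capable, and minimality of the epicenter yields $Z_c^{*}(A\oplus B)\subseteq Z_c^{*}(A)\oplus Z_c^{*}(B)$. The only genuinely substantive point is the center-of-direct-sum identity $Z_c(H_1\oplus H_2)=Z_c(H_1)\oplus Z_c(H_2)$; everything else is formal manipulation of the universal property of $Z_c^{*}$. I expect that identity to be the main thing requiring care — one must check that an element central modulo $Z_{c-1}$ in the sum splits as a pair each central modulo the respective $(c-1)$-center — although it is routine once organized as the induction above, and this is presumably why the statement can be proved in close analogy with \cite[Theorem 2.7]{ni3}.
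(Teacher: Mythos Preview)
Your argument is correct and follows essentially the same route as the paper's proof: both use the isomorphism $(A\oplus B)/(Z_c^{*}(A)\oplus Z_c^{*}(B))\cong (A/Z_c^{*}(A))\oplus (B/Z_c^{*}(B))$ together with the minimality of the $c$-epicenter. The paper leaves implicit exactly the two points you spell out---that each factor $A/Z_c^{*}(A)$, $B/Z_c^{*}(B)$ is $c$-capable and that a direct sum of $c$-capable Lie algebras is $c$-capable via $Z_c(H_1\oplus H_2)=Z_c(H_1)\oplus Z_c(H_2)$---so your write-up is simply a more detailed version of the same proof.
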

\begin{proof}
Since $(A\oplus B)/(Z_c^{*}(A)\oplus Z_c^{*}(B))\cong (A/Z_c^{*}(A))\oplus (B/Z_c^{*}(B)),  $ we have $Z_c^{*}(A\oplus B)\subseteq   Z_c^{*}(A)\oplus Z_c^{*}(B),$ as required.
\end{proof}
The following results show that the capability of the direct product of a  non-abelian Lie algebra and an  abelian Lie algebra depends only on the capability of its non-abelian factor.

\begin{prop}\label{p}
Let $ L $ be a finite dimensional   Lie algebra. Then $ L\cong T\oplus A $ in which $ A $ is an abelian Lie algebra and $Z(L)\cap L^2=Z(T).$  Moreover, $Z_c^*(L)= Z_c^*(T) \subseteq T^2 $  for all $ c\geq 1.$
\end{prop}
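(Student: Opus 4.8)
The plan is to treat the two assertions separately. For the decomposition $L\cong T\oplus A$, I would argue at the level of the underlying vector space. Since $Z(L)\cap L^2$ is a subspace of the centre, I choose a complement $A$ so that $Z(L)=(Z(L)\cap L^2)\oplus A$. Then $A$ is central, hence an abelian ideal, and $A\cap L^2\subseteq A\cap(Z(L)\cap L^2)=0$. Because $A\cap L^2=0$ I can enlarge a basis of $L^2$ to a basis of a complement of $A$ and let $T$ be its span; thus $L=T\oplus A$ as vector spaces with $L^2\subseteq T$. From $L^2\subseteq T$ we get $[T,T]\subseteq L^2\subseteq T$, so $T$ is an ideal, while $[T,A]\subseteq[L,A]=0$ shows the sum is a Lie-algebra direct sum.

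Reading off the direct sum gives $Z(L)=Z(T)\oplus A$ and $L^2=T^2$. I then close this part with a dimension count: $\dim Z(T)=\dim Z(L)-\dim A=\dim(Z(L)\cap L^2)$, whereas $(Z(T)\oplus A)\cap T^2=Z(T)\cap T^2$ (using $A\cap T=0$ and $T^2\subseteq T$) gives $Z(L)\cap L^2=Z(T)\cap T^2\subseteq Z(T)$. Equality of dimensions forces $Z(L)\cap L^2=Z(T)$, and in particular $Z(T)=Z(T)\cap T^2\subseteq T^2$; equivalently, $T$ has no nonzero abelian direct summand.

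For the ``moreover'' part I would first apply Lemma \ref{511} to obtain the upper bound $Z_c^*(L)\subseteq Z_c^*(T)\oplus Z_c^*(A)$. Writing $C=T/Z_c^*(T)$, which is $c$-capable by the definition of the epicenter, the inclusion $Z_c^*(L)\subseteq Z_c^*(T)$ reduces to showing that $L/Z_c^*(T)\cong C\oplus A$ is again $c$-capable, i.e. that an abelian direct summand may be adjoined to a $c$-capable algebra without destroying $c$-capability. For the reverse inclusion $Z_c^*(T)\subseteq Z_c^*(L)$ one must show that the (necessarily $T^2$-contained) epicentral elements of $T$ are not ``realised'' after adjoining $A$, and that no nonzero part of $A$ falls into $Z_c^*(L)$.

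The hard part is exactly this control of $Z_c^*$ under adjoining an abelian direct summand, and it is where Theorem \ref{12} enters. I expect to pass to the $c$-nilpotent multiplier and exploit the decomposition $\mathcal{M}^{(c)}(T\oplus A)\cong\mathcal{M}^{(c)}(T)\oplus\mathcal{M}^{(c)}(A)\oplus\tau(T^{ab},A^{ab})_{c+1}$ from Corollary \ref{61} and Theorem \ref{12}, combined with the standard correspondence between the $c$-epicenter and the natural map on $c$-nilpotent multipliers induced by a central quotient: the extra summand $\tau(T^{ab},A^{ab})_{c+1}$ is precisely what absorbs the abelian directions, preventing new epicentral elements, while the $\mathcal{M}^{(c)}(T)$ summand carries the obstruction living in $T^2$. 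The final containment $Z_c^*(L)=Z_c^*(T)\subseteq T^2$ then records that the epicenter avoids any central complement of $L^2$, which is available here because $Z(T)\subseteq T^2$.
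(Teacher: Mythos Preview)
Your decomposition argument is correct and more self-contained than the paper's, which simply invokes \cite[Proposition~3.1]{ni8} for the splitting $L\cong T\oplus A$, the identity $Z(L)\cap L^2=Z(T)$, \emph{and} the chain $Z_c^*(L)\subseteq Z_c^*(T)\subseteq T^2$. So for the first assertion you actually do more than the paper.

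For the ``moreover'' part your plan is aligned with the paper but remains a sketch, and one specific point is missing. The paper's mechanism for the reverse inclusion $Z_c^*(T)\subseteq Z_c^*(L)$ is a dimension count: apply Theorem~\ref{12} to both $L=T\oplus A$ and $L/Z_c^*(T)=(T/Z_c^*(T))\oplus A$, and then invoke the numerical criterion \cite[Corollary~2.4]{sal2} (an ideal $N\subseteq Z_c(L)$ lies in $Z_c^*(L)$ exactly when $\dim\mathcal{M}^{(c)}(L)=\dim\mathcal{M}^{(c)}(L/N)-\dim(N\cap L^{c+1})$). The reason this computation closes up is that $Z_c^*(T)\subseteq T^2$, hence $(T/Z_c^*(T))^{ab}=T^{ab}$ and the cross term $\tau(T^{ab},A)_{c+1}$ is \emph{unchanged} after passing to the quotient; all the drop in multiplier dimension is concentrated in the $\mathcal{M}^{(c)}(T)$ summand, where \cite[Corollary~2.4]{sal2} applied to $T$ itself already measures it as $\dim(Z_c^*(T)\cap T^{c+1})$. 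Your heuristic that ``$\tau$ absorbs the abelian directions'' is gesturing at this, but the argument only works once you use that the $\tau$ term literally does not move, and for that you need $Z_c^*(T)\subseteq T^2$.

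That last containment is also where your write-up has a gap: you deduce $Z(T)\subseteq T^2$ from the construction, but $Z_c^*(T)$ lies in $Z_c(T)$, not in $Z(T)$, and $Z_c(T)$ need not sit inside $T^2$. So $Z(T)\subseteq T^2$ alone does not give $Z_c^*(T)\subseteq T^2$; the paper imports this from \cite{ni8}. Likewise, your route to $Z_c^*(L)\subseteq Z_c^*(T)$ via ``$c$-capable $\oplus$ abelian is $c$-capable'' is a legitimate reduction but is not proved (and is not automatic when $\dim A=1$, since a one-dimensional abelian Lie algebra is not $c$-capable), whereas the paper again takes this inclusion from \cite{ni8}.
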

\begin{proof}
By applying \cite[Proposition 3.1]{ni8}, we have $ L\cong T\oplus A $  such that
$Z(L)\cap L^2=Z( T) $ and $ A $ is an abelian Lie algebra. Moreover, $Z_c^*(L)\subseteq Z_c^*(T) \subseteq T^2 $  for all $ c\geq 1.$  If $Z_c^{*}(T)=0,  $ then $ Z_c^{*}(L)= Z_c^{*}(T)=0. $
Now let $ Z_c^{*}(T)\neq 0. $ We claim that  $ Z_c^{*}(T)\subseteq Z_c^{*}(L).$
By invoking Theorem \ref{12}, we have
\[ \mathcal{M}^{(c)}(L)\cong   \mathcal{M}^{(c)}(T)\oplus  \mathcal{M}^{(c)}(A) \oplus \tau(T/T^2, A)_{c+1}\]
and
\[ \mathcal{M}^{(c)}(L/Z_c^{*}(T))\cong   \mathcal{M}^{(c)}(T/Z_c^{*}(T))\oplus  \mathcal{M}^{(c)}(A) \oplus \tau(T/T^2, A)_{c+1}.\]
Now \cite[Corollary 2.4]{sal2} implies $\dim \mathcal{M}^{(c)}(T)=\dim \mathcal{M}^{(c)}(T/Z_c^{*}(T))-\dim (Z_c^{*}(T)\cap L^{c+1}).$ Thus \begin{align*} &\dim \mathcal{M}^{(c)}(L)=\dim \mathcal{M}^{(c)}(T/Z_c^{*}(T))+\dim  \mathcal{M}^{(c)}(A)+\dim  \tau(T/T^2, A)_{c+1}\\&-\dim Z_c^{*}(T)\cap L^{c+1}=\dim \mathcal{M}^{(c)}(L/Z_c^{*}(T))-\dim (Z_c^{*}(T)\cap L^{c+1}). \end{align*}
Again by \cite[Corollary 2.4]{sal2}, $ Z_c^{*}(T)\subseteq Z_c^{*}(L),$ as required.
\end{proof}
The following corollary is an immediate consequence of Proposition \ref{p}.
\begin{cor}\label{pp}
Let $ L=T\oplus A(n) $ be a finite dimensional Lie algebra such that
$ T $ is a non-abelian Lie algebra. Then $ L $ is $c$-capable if and only if $ T $ is $c$-capable.
\end{cor}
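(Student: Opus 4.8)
The plan is to deduce Corollary \ref{pp} directly from Proposition \ref{p}, which already does essentially all the work. The key observation recorded in Proposition \ref{p} is that $Z_c^*(L) = Z_c^*(T)$ whenever $L \cong T \oplus A$ with $A$ abelian, since the abelian summand contributes nothing to the $c$-epicenter. Recalling from the text preceding Lemma \ref{511} that a Lie algebra $M$ is $c$-capable if and only if $Z_c^*(M) = 0$, the entire argument reduces to chasing this equivalence through the equality of epicenters.

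Concretely, I would argue as follows. In our situation $A = A(n)$ is the abelian Lie algebra of dimension $n$, and $T$ is non-abelian, so $L = T \oplus A(n)$ fits the hypotheses of Proposition \ref{p}. That proposition yields $Z_c^*(L) = Z_c^*(T)$. Now suppose first that $T$ is $c$-capable; then by the characterization $Z_c^*(T) = 0$, hence $Z_c^*(L) = Z_c^*(T) = 0$, so $L$ is $c$-capable. Conversely, if $L$ is $c$-capable then $Z_c^*(L) = 0$, and since $Z_c^*(T) = Z_c^*(L) = 0$ we conclude that $T$ is $c$-capable. This establishes the biconditional.

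I do not anticipate a genuine obstacle here: the corollary is a formal consequence of the epicenter equality, and the only point requiring care is to invoke the correct direction of the capability criterion $Z_c^*(M) = 0 \iff M$ is $c$-capable. The one place where the non-abelian hypothesis on $T$ is used is implicitly inside Proposition \ref{p}, where the decomposition $L \cong T \oplus A$ and the identification $Z(L) \cap L^2 = Z(T)$ rely on \cite[Proposition 3.1]{ni8}; so I would simply make sure to state that $T$ being non-abelian guarantees $A(n)$ is the full abelian part of the decomposition rather than absorbing part of the non-abelian structure. With that, the proof is a one-line application of Proposition \ref{p} together with the definition of $c$-capability.
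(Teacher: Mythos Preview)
Your proposal is correct and matches the paper's approach exactly: the paper states the corollary as an immediate consequence of Proposition \ref{p}, and your argument spells out precisely that deduction via the equality $Z_c^*(L)=Z_c^*(T)$ together with the criterion that $c$-capability is equivalent to vanishing of the $c$-epicenter. The only minor caveat is that ``$T$ non-abelian'' does not by itself force $A(n)$ to be the full abelian summand of the Proposition \ref{p} decomposition, but this is harmless since applying Proposition \ref{p} to $T$ as well yields the same epicenter in any case.
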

\begin{thm}
Let $ L=L_1\oplus  L_2 $ such that $ L_1 $ and $ L_2 $ are finite dimensional non-abelian Lie algebras. Then  $Z_c^{*}(L_1\oplus  L_2 )=Z_c^{*}(L_1)\oplus  Z_c^{*}(L_2).$
\end{thm}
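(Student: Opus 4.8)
The plan is to combine the inclusion already furnished by Lemma \ref{511} with a reverse inclusion obtained by a dimension count, in the spirit of the proof of Proposition \ref{p}. Lemma \ref{511} gives $Z_c^{*}(L_1\oplus L_2)\subseteq Z_c^{*}(L_1)\oplus Z_c^{*}(L_2)$, so it suffices to prove that $N:=Z_c^{*}(L_1)\oplus Z_c^{*}(L_2)$ is contained in $Z_c^{*}(L_1\oplus L_2)$. First I would check that $N$ is an admissible candidate for the criterion of \cite[Corollary 2.4]{sal2}: since $Z_c^{*}(L_i)\subseteq Z_c(L_i)$, we have $N\subseteq Z_c(L_1)\oplus Z_c(L_2)=Z_c(L_1\oplus L_2)$, so $N$ lies in the $c$-center of $L:=L_1\oplus L_2$.

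Next I would set $\bar L_i=L_i/Z_c^{*}(L_i)$, so that $L/N\cong \bar L_1\oplus \bar L_2$. The crucial preliminary observation is that $Z_c^{*}(L_i)\subseteq L_i^{2}$, which follows by applying Proposition \ref{p} to each non-abelian factor $L_i$. Consequently $\bar L_i^{ab}=L_i/(L_i^{2}+Z_c^{*}(L_i))=L_i/L_i^{2}=L_i^{ab}$, and hence the tensor term is unchanged: $\tau(\bar L_1^{ab},\bar L_2^{ab})_{c+1}\cong \tau(L_1^{ab},L_2^{ab})_{c+1}$. Applying Theorem \ref{12} to both $L$ and $L/N\cong \bar L_1\oplus \bar L_2$ and subtracting, the common $\tau$-summand cancels and I obtain
\[ \dim \mathcal{M}^{(c)}(L/N)-\dim \mathcal{M}^{(c)}(L)=\sum_{i=1}^{2}\big(\dim \mathcal{M}^{(c)}(\bar L_i)-\dim \mathcal{M}^{(c)}(L_i)\big). \]

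Then I would invoke \cite[Corollary 2.4]{sal2} for each factor separately, exactly as in Proposition \ref{p}, to get $\dim \mathcal{M}^{(c)}(\bar L_i)-\dim \mathcal{M}^{(c)}(L_i)=\dim\big(Z_c^{*}(L_i)\cap L_i^{c+1}\big)$. Summing over $i$ and using $L^{c+1}=L_1^{c+1}\oplus L_2^{c+1}$, so that $N\cap L^{c+1}=\big(Z_c^{*}(L_1)\cap L_1^{c+1}\big)\oplus\big(Z_c^{*}(L_2)\cap L_2^{c+1}\big)$, the right-hand side of the displayed identity equals $\dim(N\cap L^{c+1})$. Thus $\dim \mathcal{M}^{(c)}(L/N)-\dim \mathcal{M}^{(c)}(L)=\dim(N\cap L^{c+1})$, and the converse direction of \cite[Corollary 2.4]{sal2} yields $N\subseteq Z_c^{*}(L)$. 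Together with Lemma \ref{511} this gives the asserted equality $Z_c^{*}(L_1\oplus L_2)=Z_c^{*}(L_1)\oplus Z_c^{*}(L_2)$.

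The step I expect to be the main obstacle is the bookkeeping that makes the dimension criterion bite in both directions simultaneously: one must be certain that the $\tau$-terms for $L$ and $L/N$ genuinely coincide (which is precisely where $Z_c^{*}(L_i)\subseteq L_i^{2}$, hence $\bar L_i^{ab}\cong L_i^{ab}$, is needed) and that $N\cap L^{c+1}$ splits as the direct sum of the factorwise intersections, so that the two applications of \cite[Corollary 2.4]{sal2} to the $L_i$ assemble into a single application to $L$.
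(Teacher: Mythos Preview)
Your proposal is correct and uses the same ingredients as the paper's proof: Lemma \ref{511} for the inclusion $Z_c^{*}(L)\subseteq Z_c^{*}(L_1)\oplus Z_c^{*}(L_2)$, Theorem \ref{12} together with the fact $Z_c^{*}(L_i)\subseteq L_i^{2}$ (from Proposition \ref{p}) to see that the $\tau$-summand is unchanged upon passing to the quotient, and the dimension criterion of \cite[Corollary 2.4]{sal2} for the reverse inclusion.

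The organization differs slightly. The paper first invokes Proposition \ref{p} to decompose each $L_i=T_i\oplus A_i$ and to reduce the problem to showing $Z_c^{*}(T_1\oplus T_2)=Z_c^{*}(T_1)\oplus Z_c^{*}(T_2)$; it then applies the multiplier/dimension argument to each ideal $Z_c^{*}(T_i)$ separately (quotienting by one factor at a time) and treats the case $Z_c^{*}(T_i)=0$ by a small case distinction. You instead work directly with $L_1,L_2$ and apply the criterion once to the full ideal $N=Z_c^{*}(L_1)\oplus Z_c^{*}(L_2)$, using $L^{c+1}=L_1^{c+1}\oplus L_2^{c+1}$ to split $N\cap L^{c+1}$. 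Your route is a bit more economical---it avoids the detour through the $T_i$ and the case analysis---while the paper's version makes more explicit that each factor's $c$-epicenter individually lands inside $Z_c^{*}(L)$. Both arguments are valid and rest on the same three results.
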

\begin{proof}
 We have $ L_i=T_i\oplus A_i $ and $Z_c^{*}(T_i)= Z_c^{*}(L_i)  $ for $ 1\leq i\leq 2,$ by Proposition \ref{p}. Therefore $L=T_1\oplus  T_2 \oplus A $ and $Z_c^{*}(L)= Z_c^{*}(T_1\oplus  T_2),$ where $ A=A_1\oplus A_2, $ by Proposition \ref{p}.
We claim that $Z_c^{*}(T_1\oplus  T_2) =Z_c^{*}(T_1)\oplus  Z_c^{*}(T_2). $ Lemma \ref{511} implies $Z_c^{*}(T_1\oplus  T_2) \subseteq Z_c^{*}(T_1)\oplus  Z_c^{*}(T_2). $
Now we show that  $ Z_c^{*}(T_i)\subseteq Z_c^{*}(T_1\oplus  T_2)$ for $ i=1,2. $ If $Z_c^{*}(T_i)=0 $ for $ i=1,2, $ then $ Z_c^{*}(T)= Z_c^{*}(T_1)\oplus Z_c^{*}(T_2) =0. $
Now we have  $ Z_c^{*}(T_i)\neq 0 $ for $ i=1,2, $ or $ Z_c^{*}(T_1)\neq 0 $ and $ Z_c^{*}(T_2)=0, $ or  $ Z_c^{*}(T_2)\neq 0 $ and $ Z_c^{*}(T_1)=0. $
First consider $ Z_c^{*}(T_i)\neq 0 $ for $ i=1,2. $
By invoking Theorem \ref{12}, we have
\[ \mathcal{M}^{(c)}(T_1\oplus  T_2)\cong   \mathcal{M}^{(c)}(T)\oplus  \mathcal{M}^{(c)}(T_2) \oplus \tau(T_1/T_1^2, T_2/T_2^2)_{c+1},\]
\[ \mathcal{M}^{(c)}(T_1\oplus  T_2/Z_c^{*}(T_1))\cong   \mathcal{M}^{(c)}(T_1/Z_c^{*}(T_1))\oplus  \mathcal{M}(T_2) \oplus \tau(T_1/T_1^2, T_2/T_2^2)_{c+1}\]
and
\[ \mathcal{M}^{(c)}(T_1\oplus  T_2/Z_c^{*}(T_2))\cong   \mathcal{M}^{(c)}(T_1)\oplus  \mathcal{M}(T_2/Z_c^{*}(T_2)) \oplus \tau(T_1/T_1^2, T_2/T_2^2)_{c+1}.\]
Now \cite[Corollary 2.4]{sal2}  implies $\dim \mathcal{M}^{(c)}(T_1)=\dim \mathcal{M}^{(c)}(T_1/Z_c^{*}(T_1))-\dim Z_c^{*}(T_1)\cap (T_1)^{c+1}$
and $\dim \mathcal{M}^{(c)}( T_2)=\dim \mathcal{M}^{(c)}(T_2/Z_c^{*}(T_2))-\dim Z_c^{*}(T_1)\cap (T_2)^{c+1}.$
Thus $ \dim \mathcal{M}^{(c)}(T_1\oplus  T_2)= \dim \mathcal{M}^{(c)}(T_1/Z_c^{*}(T_1))-\dim Z_c^{*}(T_1)\cap (T_1)^{c+1}+\dim \mathcal{M}^{(c)}(T_2)+\tau(T_1/T_1^2, T_2/T_2^2)_{c+1}<  \mathcal{M}^{(c)}(T_1\oplus  T_2/Z_c^{*}(T_1))$
and
\begin{align*} &\dim \mathcal{M}^{(c)}(T_1\oplus  T_2)= \dim \mathcal{M}^{(c)}(T_2/Z_c^{*}(T_2))-\dim Z_c^{*}(T_2)\cap (T_2)^{c+1}+\\&\dim \mathcal{M}^{(c)}(T_1)+\tau(T_1/T_1^2, T_2/T_2^2)_{c+1}<  \mathcal{M}^{(c)}(T_1\oplus  T_2/Z_c^{*}(T_2)).\end{align*} By using  \cite[Corollary 2.4]{sal2}, we have $ Z_c^{*}(T_i)\subseteq Z_c^{*}(T),$ for $ i=1,2. $ Thus $Z_c^{*}(L)= Z_c^{*}(T_1\oplus  T_2)  =Z_c^{*}(T_1)\oplus  Z_c^{*}(T_2)=Z_c^{*}(L_1)\oplus  Z_c^{*}(L_2).$ The proof is completed.
\end{proof}
Now we can state the following corollary which is interesting enough to be considered when studying the $c$-capability of finite dimensional non-abelian Lie algebras.

\begin{cor}
Let $L_1$ and $L_2$ be two finite dimensional non-abelian  Lie algebras and $c\geq 1$. Then $L_1\oplus L_2$ is $c$-capable if and only if $L_1$ and $L_2$ are $c$-capable.
\end{cor}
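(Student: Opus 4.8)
The plan is to reduce the statement entirely to two ingredients already at hand: the equivalence recorded earlier in this section that a finite dimensional Lie algebra $L$ is $c$-capable if and only if $Z_c^*(L)=0$, and the decomposition of the $c$-epicenter established in the theorem immediately preceding this corollary. First I would apply the capability criterion to $L=L_1\oplus L_2$, so that the assertion ``$L_1\oplus L_2$ is $c$-capable'' becomes the assertion $Z_c^*(L_1\oplus L_2)=0$, and likewise ``$L_i$ is $c$-capable'' becomes $Z_c^*(L_i)=0$ for $i=1,2$. This recasts the whole statement as a claim purely about vanishing of epicenters.

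Next I would invoke the preceding theorem, whose hypothesis (both factors finite dimensional and non-abelian) coincides exactly with the hypothesis of the corollary, to obtain $Z_c^*(L_1\oplus L_2)=Z_c^*(L_1)\oplus Z_c^*(L_2)$. Since a direct sum of two ideals is the zero ideal precisely when each summand is the zero ideal, the equation $Z_c^*(L_1\oplus L_2)=0$ is equivalent to the conjunction $Z_c^*(L_1)=0$ and $Z_c^*(L_2)=0$. Translating back through the capability criterion yields exactly the claimed equivalence: $L_1\oplus L_2$ is $c$-capable if and only if both $L_1$ and $L_2$ are $c$-capable.

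Because the structural work has already been carried out in the theorem, there is essentially no obstacle to overcome; the corollary is a formal consequence. The only point deserving a moment's attention is verifying that the theorem may be applied verbatim, i.e. that both $L_1$ and $L_2$ are genuinely non-abelian as the corollary assumes, which guarantees the needed decomposition $Z_c^*(L_1\oplus L_2)=Z_c^*(L_1)\oplus Z_c^*(L_2)$ holds. Granting this, the biconditional follows at once from the elementary fact that $A\oplus B=0$ if and only if $A=0$ and $B=0$.
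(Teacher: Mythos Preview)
Your proposal is correct and matches the paper's intent: the corollary is stated without proof as an immediate consequence of the preceding theorem, and your argument---reduce $c$-capability to $Z_c^*=0$ and invoke the decomposition $Z_c^*(L_1\oplus L_2)=Z_c^*(L_1)\oplus Z_c^*(L_2)$---is exactly the intended derivation.
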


\end{document}